\newtheorem{thm}{Theorem}[section]
\newtheorem{lemma}[thm]{Lemma}
\newtheorem{cor}[thm]{Corollary}
\newtheorem{defin}[thm]{Definition}
\newtheorem{rmk}[thm]{Remark}
\numberwithin{equation}{section}
\DeclareMathOperator*{\hess}{\nabla^2}
\DeclareMathOperator*{\tr}{tr}
\DeclareMathOperator*{\dist}{dist}
\DeclareMathOperator*{\hessxy}{\nabla^2_{x,y}}
\DeclareMathOperator*{\hessx}{\nabla^2_{x}}
\DeclareMathOperator*{\hessy}{\nabla^2_{y}}
\renewcommand{\epsilon}{\varepsilon}
\newcommand{\R}{\mathbb{R}}
\newcommand{\omstar}{\Omega^*}
\newcommand{\omclose}{\overline{\Omega}}
\newcommand{\omstarclose}{\overline{\omstar}}
\newcommand{\ombdry}{\partial \Omega}
\newcommand{\omstarbdry}{\partial \omstar}
\newcommand{\epstar}{\epsilon^*}
\newcommand{\omstarbdryepstar}{\Gamma^*_{\epstar}}
\newcommand{\Omtil}{\Omega_1}
\newcommand{\Omtilbdry}{\partial\Omtil}
\newcommand{\omep}{\Omega_\epsilon(x_0)}
\newcommand{\omepbdry}{\Gamma_\epsilon}
\newcommand{\nustar}{\nu^*}
\newcommand{\gradphi}{\nabla\phi}
\newcommand{\gradu}{\nabla u}
\newcommand{\gradustar}{\nabla u^*}
\newcommand{\omtil}{Y^{-1}(\omstarbdryepstar)}
\newcommand{\omtilbdry}{\partial\left(\omtil\right)}
\newcommand{\Mtil}{\tilde{M}}
\newcommand{\Btil}{\tilde{B}}
\newcommand{\udot}{\dot{u}}
\newcommand{\ustardot}{\dot{u^*}}
\newcommand{\vdot}{\dot{v}}
\newcommand{\wdot}{\dot{w}}
\newcommand{\phidot}{\dot{\phi}}
\newcommand{\hstar}{h^*}
\newcommand{\hstarbar}{\bar{h}^*}
\renewcommand{\hbar}{\bar{h}}
\newcommand{\Gbar}{\bar{G}}
\newcommand{\Bdel}{B_\delta}
\newcommand{\delstar}{\delta^*}
\newcommand{\delbar}{\delta_1}
\newcommand{\delstarbar}{\delstar_1}
\newcommand{\deltil}{\delstar_2}
\newcommand{\inner}[2]{\ensuremath{\langle#1, #2\rangle}}
\newcommand{\Dp}[1][k]{\ensuremath{D_{p_#1}}}
\newcommand{\Dpp}[2][k]{\ensuremath{D_{p_#1p_#2}}}
\newcommand{\Dxx}[2][k]{\ensuremath{D_{x_#1x_#2}}}
\newcommand{\Dxp}[2][k]{\ensuremath{D_{x_#2p_#1}}}
\newcommand{\Dx}[1][k]{\ensuremath{D_{x_#1}}}
\newcommand{\Fx}[1][k]{\ensuremath{F_{x_#1}}}
\newcommand{\Fxx}[2][k]{\ensuremath{F_{x_#1x_#2}}}
\newcommand{\Fp}[1][k]{\ensuremath{F_{p_#1}}}
\newcommand{\Fpp}[2][k]{\ensuremath{F_{p_#1p_#2}}}
\newcommand{\Gp}[1][k]{\ensuremath{G_{p_#1}}}
\newcommand{\Gpp}[2][k]{\ensuremath{G_{p_#1p_#2}}}
\newcommand{\Fxp}[2][k]{\ensuremath{F_{x_#2p_#1}}}
\newcommand{\pdiff}[2][x]{\ensuremath{\frac{\partial}{\partial#1_{#2}}}}
\newcommand{\ftil}{\hat{f}}
\newcommand{\util}{\hat{u}}
\newcommand{\utildot}{\dot{\util}}
\newcommand{\wtil}{\hat{w}}
\newcommand{\vtil}{v_1}
\newcommand{\ytil}{\tilde{y}}
\newcommand{\lambdatil}{\tilde{\lambda}}
\newcommand{\vvec}{\vec{v}}
\newcommand{\ttil}{t_1}
\newcommand{\tmax}{t_{max}}
\newcommand{\Tinv}{(T)^{-1}}
\newcommand{\phibar}{\bar{\phi}}
\newcommand{\tbar}{t_2}
\newcommand{\taustar}{\tau^*}
\newcommand{\etadot}{\dot{\eta}}
\author{Jun Kitagawa}
\title{A parabolic flow toward solutions of the optimal transportation problem on domains with boundary}
\begin{document}
\maketitle

\begin{abstract}
We consider a parabolic version of the mass transport problem, and show that it converges to a solution of the original mass transport problem under suitable conditions on the cost function, and initial and target domains.
\end{abstract}

\section{Introduction}
We are concerned in this paper with solutions to the optimal transport problem, which reads as follows. 

Given two domains $\Omega$ and $\omstar$, and two probability measures $\mu$ and $\nu$ defined on them, along with a real valued cost function $c$ defined on $\omclose \times \omstarclose$, we wish to find a measurable mapping $T: \Omega \to \omstar$ satisfying $T_\# \mu = \nu$ (defined by $T_\# \mu (E)= \mu(\Tinv(E))$ for all measurable $E\subset \omstar$) such that 
\begin{equation*}
\int_\Omega c(x, T(x))d\mu = \max_{S_\#\mu=\nu}\int_\Omega c(x, S(x))d\mu.
\end{equation*}
Under mild conditions on $c$ and the measures $\mu$ and $\nu$, it is known that the solution to this problem exists. Additionally, if $\Omega$ and $\omstar$ are subsets of $R^n$, and $\mu$ and $\nu$ are absolutely continuous with respect to Lebesgue measure, $T$ can be determined from a scalar valued potential function satisfying the following equation in an appropriately weak sense:
\begin{equation*}
\begin{cases}{}
\det{(\hess{u(x)} - A(x, \gradu(x)))}=B(x, \gradu(x)),&x\in\Omega\\
T(\Omega)=\omstar&
\end{cases}
\end{equation*}
where $A$ is a matrix valued function and $B$ is a scalar valued function defined from $c$ and the densities of the two measures.

Under certain conditions on the domains, cost, and measures, the interior regularity of the potential $u$ has been shown by Ma, Trudinger, and Wang in \cite{MR2188047}, and global regularity by Trudinger and Wang in the subsequent \cite{MR2512204}. In \cite{MR2008688}, Schn{\"u}rer and Smoczyk analyze a parabolic flow which is close to the corresponding optimal transport problem with the cost $c(x, y)=\lvert x-y\rvert^2$.

In this paper, we are concerned with a parabolic flow leading to the solution of an optimal transport problem, with cost functions other than the case $c(x, y)=\lvert x-y\rvert^2$. More specifically, we look at solutions to the equation
\begin{equation*}
\begin{cases}{}
\udot(x,t) = \log{\det{(\hess{u(x,t)} - A(x, \gradu(x,t)))}}\\
\qquad\qquad-\log{B(x, \gradu(x,t))},&x\in\Omega\\
\Gbar(x, \gradu(x, t))=0,& x\in\ombdry\\
u\vert_{t=0}=u_0&
\end{cases}
\end{equation*}
with $A$ and $B$ as above, and an appropriate boundary condition $\Gbar$.
It turns out that the conditions given in \cite{MR2512204} along with a few restrictions on the initial condition are sufficient to guarantee long time existence to this parabolic flow, and convergence to the solution of the optimal transport problem as $t\to\infty$.

We also provide here a reference table for the notation used in this paper.

\begin{tabular}{lll}
Notation & Definition & Location\\
\hline\\
$\nu$&Outer unit normal to $\ombdry$&\\
$\nustar$&Outer unit normal to $\omstarbdry$&\\
$\hbar$&Defining function for $\Omega$&\\
$\hstarbar$&Defining function for $\omstar$&\\
$Y(x, p)$&Inverse of the map $y\mapsto\nabla_xc(x, y)=p$&~\eqref{twist}\\
$X(q, y)$&Inverse of the map $x\mapsto\nabla_yc(x, y)=q$&~\eqref{twist}\\
$A(x, p)$&$\hessx{c}(x, y)\vert_{y=Y(x, p)}$&Before~\eqref{MTW}\\
$T(x, t)$&$Y(x, \gradu(x, t))$&~\eqref{notation: TBGbar}\\
$B(x, p)$&$\lvert \det\hessxy{c}\rvert \cdot \frac{f(x)}{g(Y(x, p))}$&~\eqref{notation: TBGbar}\\
$\Btil(x, p)$&$\log{B(x, p)}$&~\eqref{notation: TBGbar}\\
$\Gbar(x, p)$&$\hstarbar(x, Y(x, p))$&~\eqref{notation: TBGbar}\\
$w_{ij}(x, t)$&$u_{ij}(x, t)-A_{ij}(x, \gradu(x, t))$&~\eqref{notation: TBGbar}\\
$\beta(x, t)$&$\Gbar_p(x, p)\vert_{p=\gradu(x, t)}=\hstarbar_l(y)c^{l, k}(x, y)\vert_{y=Y(x, \gradu(x, t))}$&Theorem~\ref{obliquethm}\\
$\omep$&$\Omega \cap B_\epsilon(x_0)$&Before~\eqref{Lvcalc}\\
$M$&$\sup_{x\in\ombdry}\lvert \hess{u(x, t)}\rvert$&Theorem~\ref{intc2est}\\
$\Mtil$&$\sup_{x\in\Omega}\lvert \hess{u(x, t)}\rvert$&Theorem~\ref{ubetabeta}\\
$M_w$&$\sup_{x\in\ombdry}\lvert w_{ij}(x, t)\rvert$&Theorem~\ref{utangent}\\
$\Mtil_w$&$\sup_{x\in\Omega}\lvert w_{ij}(x, t)\rvert$&Theorem~\ref{utangent}\\
$\omepbdry$&$\left\{x\in\Omega \vert \dist(x, \ombdry)<\epsilon\right\}$&Before Theorem~\ref{thm: extended defining functions}\\
$\omstarbdryepstar$&$\left\{y\in\omstar \vert \dist(y, \omstarbdry)<\epstar\right\}$&Before Theorem~\ref{thm: extended defining functions}\\
$h$&Function defined from $\hbar$ with a $c$-convexity property&Theorem~\ref{thm: extended defining functions}\\
$\hstar$&Function defined from $\hstarbar$ with a $c$-convexity property&Theorem~\ref{thm: extended defining functions}\\
$G(x, p)$&Function uniformly convex in $p$&Corollary~\ref{cor: construction of G}\\
\end{tabular}
\\

The author would like to thank Micah Warren for suggesting this problem, and many fruitful discussions regarding this paper.

\section{Preliminaries of Optimal Transport}
In this section, we recall some basic facts and definitions regarding the optimal transport problem, along with the key conditions from \cite{MR2512204}.

Let $\Omega$ and $\omstar$ be open, smooth, bounded domains in $\R^n$, with $f$ and $g$ smooth functions on $\Omega$ and $\omstar$ respectively. We assume the mass balance condition,
\begin{equation}
\int_\Omega f = \int_{\omstar}g
\label{balance}
\end{equation}
along with the bound
\begin{equation}
0<\lambda \leq f, g \leq \Lambda < \infty
\label{densitybound}
\end{equation}
for some constants $\lambda$ and $\Lambda$.
We will assume the following conditions on $c$:
\begin{equation}\label{A0}
c \in C^{4+\alpha}(\omclose \times \omstarclose)\text{ for some }\alpha\in(0, 1].\tag{A0}\\
\end{equation}
We assume the mappings $y \mapsto \nabla_xc(x, y)$ for each $x \in\omclose$ and $x \mapsto \nabla_yc(x, y)$ for each $y \in \omstarclose$ are injective.
For any $p \in \nabla_xc(x, \omstar)$ and $x \in \Omega$, (resp. $q \in \nabla_yc(\Omega, y)$ and $y \in \omstar$) we write $Y(x, p)$ (resp. $X(q, y)$) for the unique element of $\omstar$ (resp. $\Omega$) such that 
\begin{equation}
\begin{aligned}
\nabla_xc(x, y)\vert_{y=Y(x, p)}&=p\\
\nabla_yc(x, y)\vert_{x=X(q, y)}&=q.
\end{aligned}
\tag{A1}
\label{twist}
\end{equation}
We also assume a nondegeneracy condition on the cost $c$:
\begin{equation}
\det\hessxy{c}(x, y) \neq 0, \forall x \in \omclose, y \in \omstarclose. \tag{A2}
\label{nondeg}\\
\end{equation}
Finally, writing $A(x, p) = \hessx{c(x, y)}\vert_{y=Y(x, p)}$ we assume 
\begin{equation}
D_{p_ip_j}A_{kl}(x, p) \xi_i \xi_j \eta_k\eta_l \geq 0, \forall x \in \Omega, p \in \nabla_xc(x, \omstar), \xi \perp \eta.\tag{A3w}
\label{MTW}
\end{equation}

Additionally, we need to make the following assumptions on the domains $\Omega$ and $\omstar$. We write $c_{ij \ldots,\ kl \ldots} = \pdiff{i}\pdiff{j}\ldots\pdiff[y]{k}\pdiff[y]{l}\ldots c$ and indicate the inverse of a matrix by raising its indices.
\begin{defin}\label{def:c-convex domains}
We say that $\Omega$ is $c$-convex with respect to $\omstar$ if the set $\nabla_yc(\Omega, y)$ is a convex set for any $y\in\omstar$. Likewise, $\omstar$ is $c^*$-convex with respect to $\Omega$ if the set $\nabla_xc(x, \omstar)$ is a convex set for any $x\in\Omega$.\\
We say that $\Omega$ is uniformly $c$-convex with respect to $\omstar$ if it is $c$-convex and satisfies
\begin{equation}
[D_i\nu_j(x)-c^{l, k}c_{ij, l}(x, y)\nu_k(x)]\tau_i\tau_j(x) \geq \delbar\text{, }\forall x \in \ombdry,\ y \in \omstar
\label{unifcconvex}
\end{equation}
for some $\delbar > 0$, where $\tau$ is any unit tangent vector to $\ombdry$, and $\nu$ is the outer unit normal to $\ombdry$. Likewise, $\omstar$ is uniformly $c^*$-convex with respect to $\Omega$ if it is $c^*$-convex and satisfies
\begin{equation*}
[D_i\nustar_j(y)-c^{k, l}c_{l, ij}(x, y)\nustar_k(y)]\taustar_i\taustar_j(y) \geq \delstarbar\text{, }\forall y \in \omstarbdry,\ x \in \Omega
\label{unifcstarconvex}
\end{equation*}
for some $\delstarbar > 0$, where $\taustar$ is any unit tangent vector to $\omstarbdry$, and $\nustar$ is the outer unit normal to $\omstarbdry$. 
\end{defin}

\begin{rmk}
For some fixed $y\in\omstar$, given any two points $p_1=\nabla_yc(x_1, y)$ and $p_2=\nabla_yc(x_2, y)$ for $x_1$, $x_2\in\Omega$, we define the $c$-segment with respect to $y$ between $x_1$ and $x_2$ as the inverse image of the straight line between $p_1$ and $p_2$ under the map $\nabla_yc(\cdot, y)$. It is clear that $\Omega$ is $c$-convex with respect to $\omstar$ if and only if every $c$-segment with respect to any $y\in\omstar$ between any two $x_1$ and $x_2\in\Omega$ remains inside $\Omega$.\\
An analogous definition and remark hold for a $c^*$-segment with respect to some $x\in\Omega$ between two points $y_1$, $y_2\in\omstar$.
\end{rmk}

\begin{defin}\label{def:c-convex functions}
For a $y_0\in\omstar$ and a $\lambda_0\in\R$, we call a function of the form $c(\cdot, y_0)+\lambda_0$, a $c$-support function.

We say that a function $\phi$ is $c$-convex, if for every $x_0\in \Omega$ there exists a $c$-support function such that 
\begin{align*}
\phi(x_0)&=c(x_0, y_0)+\lambda_0\\
\phi(x) &\geq c(x, y_0)+\lambda_0\text{, for all }x\neq x_0.
\end{align*}
We say $\phi$ is strictly $c$-convex if the second inequality above is strict.

If $\phi$ is $C^2$, we say that it is locally, uniformly $c$-convex if
\begin{equation*}
\phi_{ij}(x)-A_{ij}(x, Y(x, \gradphi(x)))>0 
\end{equation*}
as a matrix, for every $x\in\omclose$.
\end{defin}

\section{The Main Problem}
The symbols $\nabla$, $\hess{}$, and $D_i$ will denote differentiation in the $x$ variables, with specific variables indicated by subscripts when necessary. The notation $\udot$ will indicate derivatives in the $t$ direction. Now, by the smoothness of $\omstarbdry$, we may extend $g$ to all of $\R^n$ so that it is $C^2$ and satisfies the bound $\frac{\lambda}{2}\leq g \leq 2\Lambda$. Writing 
\begin{align}\label{notation: TBGbar}
T(x, t) &= Y(x, \gradu(x, t))\notag\\
B(x, p)&= \lvert\det \hessxy{c(x, y)}\rvert_{y=Y(x, p)}\cdot\frac{f(x)}{g(Y(x, p))}\notag\\
\Btil(x, p) &= \log{B(x, p)}\notag\\
\Gbar(x, p)&=\hstarbar(Y(x, p))\notag\\
w_{ij}(x, t) &= u_{ij}(x, t) - A_{ij}(x, \gradu(x, t)).
\end{align}
where $\hstarbar$ is a normalized defining function for $\omstar$ (meaning $\nabla\hstarbar= \nustar$ on $\omstarbdry$, $\hstarbar=0$ on $\omstarbdry$, and $\hstarbar < 0$ on $\omstar$), we consider solutions $u=u(x, t)$ to the following equation:

\begin{numcases}{\label{eqn}}
\udot(x,t) = \log{\det{(\hess{u(x,t)} - A(x, \gradu(x,t)))}}\notag\\
\qquad\qquad-\log{B(x, \gradu(x,t))},&$x\in\Omega$\label{floweqn}\\
\Gbar(x, \gradu(x, t))=0,& $x\in\ombdry$\label{bdrycond}\\
u\vert_{t=0}=u_0.&
\end{numcases}
We require the following conditions on $u_0\in C^{2+\alpha}(\Omega \times \{0\})$:
\begin{numcases}{\label{u_0 conditions}}
u_0\text{ is locally, uniformly }c\text{-convex}\\
\hstarbar(Y(x, \gradu_0(x)))=0\text{ on }\ombdry\\
T_0(\Omega) = \omstar
\end{numcases}
where $T_0(x)=Y(x, \gradu_0(x))$.

The main theorem we prove is the following.
\begin{thm}\label{thm: main theorem}
Suppose that $c$ satisfies conditions~\eqref{A0}-~\eqref{MTW}. Additionally, suppose that $\Omega$ and $\omstar$ are uniformly $c$ and $c^*$-convex with respect to each other.

If $u_0\in C^{2+\alpha}(\Omega \times \{0\})$ satisfies the conditions~\eqref{u_0 conditions}, there exists a solution $u$ to equation~\eqref{eqn} for all times $t\geq 0$ which is $C^2(\omclose)$ in the $x$ variables and $C^1(\R_+)$ in the $t$ variable.\\
In addition, $u(\cdot, t)$ converges in $C^2(\Omega)$ to a $c$-convex function $u^\infty(\cdot)$ as $t\to \infty$ which satisfies the elliptic optimal transport equation:
\begin{numcases}{\label{elliptic}}
\det{(\hess{u^\infty(x)} - A(x, \gradu^\infty(x)))}=B(x, \gradu^\infty(x)),&$x\in\Omega$\label{elliptic eqn}\\
Y(\cdot, \gradu^\infty(\cdot))[\Omega]=\omstar.&\label{elliptic bdrycond}
\end{numcases}
\end{thm}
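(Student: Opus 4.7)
The overall strategy is the standard one for parabolic fully nonlinear equations: establish short-time existence, derive a priori estimates that are uniform in time, use these to continue the solution for all $t\geq 0$, and finally exploit monotonicity to extract a limit solving the stationary problem. Because equation~\eqref{floweqn} is a logarithmic Monge--Amp\`ere flow with a nonlinear oblique boundary condition, every step requires care, but the infrastructure from the cited MTW theory applies once the preliminary estimates are in place.

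\textbf{Short-time existence and preservation of structure.} First I would linearize~\eqref{eqn} around $u_0$: the linearization of the right-hand side is $w^{ij}(D_{ij} - D_{p_k}A_{ij}D_k)$, which is uniformly parabolic by the local uniform $c$-convexity of $u_0$. The boundary operator $\Gbar_p\cdot \nabla$ is oblique at $t=0$ by the uniform $c^*$-convexity of $\omstar$ (this is essentially the content of Theorem~\ref{obliquethm}). Classical parabolic Schauder theory then yields a $C^{2+\alpha,1+\alpha/2}$ solution on some $[0, t_{\max})$. Along this solution I would show that the three structural conditions in~\eqref{u_0 conditions} are preserved: $c$-convexity follows from a maximum principle applied to the smallest eigenvalue of $w_{ij}$, the boundary identity $\hstarbar\circ T \equiv 0$ on $\ombdry$ is preserved because $\Gbar(x,\gradu)=0$ is the boundary condition, and $T(\Omega)=\omstar$ then follows from degree theory (the map $T(\cdot, t)$ depends continuously on $t$, preserves the boundary, and starts as a diffeomorphism).

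\textbf{A priori estimates uniform in $t$.} For the $C^0$ bound on $u$, I would note that $\dot u$ itself satisfies a linear parabolic equation (obtained by differentiating~\eqref{floweqn} in $t$) with an oblique boundary condition inherited from~\eqref{bdrycond}; a maximum principle then gives that $\sup|\dot u|$ is non-increasing, and integration in $t$ together with the normalization freedom of $u$ gives a uniform $C^0$ bound. The $C^1$ bound is automatic: $\gradu(\cdot,t)$ takes values in the compact set $\nabla_x c(\cdot,\omstarclose)$ because $T(\Omega)=\omstar$. The heart of the matter is the $C^2$ estimate. Here I would combine the interior estimate (Theorem~\ref{intc2est}), which uses the MTW condition~\eqref{MTW} in the now-classical Pogorelov/Trudinger-Wang fashion adapted to the parabolic setting, with the boundary estimates: the doubly-tangential bound comes from the uniform $c$-convexity of $\Omega$ together with the $c$-convexity of $u$ (Theorem~\ref{utangent}), while the doubly-normal and mixed estimates follow by barrier arguments using the extended defining functions and the auxiliary function $G$ from Corollary~\ref{cor: construction of G} (Theorem~\ref{ubetabeta}). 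Once $\hess u$ is bounded, uniform parabolicity holds and the Krylov--Safonov/Evans-Krylov theory provides a uniform $C^{2+\alpha,1+\alpha/2}$ estimate, which by standard continuation extends $u$ to all $t\geq 0$.

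\textbf{Convergence as $t\to\infty$.} The uniform $C^{2+\alpha}$ bound compactly embeds the family $\{u(\cdot, t)\}$ in $C^2(\omclose)$, so along any sequence $t_k\to\infty$ a subsequence converges in $C^2$ to some $u^\infty$. To identify the limit as a solution of~\eqref{elliptic}, I would show $\dot u(\cdot, t)\to 0$: from $\frac{d}{dt}\int_\Omega \dot u\, f\, dx = 0$ (a consequence of the flow preserving $T_\#\mu=\nu$ in an integrated sense) or from monotonicity of $\sup|\dot u|$ combined with a strong maximum principle, one concludes $\dot u_\infty \equiv 0$; then~\eqref{floweqn} reduces to~\eqref{elliptic eqn}, and the boundary condition~\eqref{bdrycond} passes to the limit as~\eqref{elliptic bdrycond}. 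Uniqueness of the limit (up to an additive constant, which I would fix using the conserved quantity above) then implies full convergence, not just subsequential.

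\textbf{Principal obstacle.} The main technical difficulty is the second-order boundary estimate, because the flow is fully nonlinear, the boundary operator $\Gbar$ is nonlinear, and the obliqueness constant must be controlled uniformly in $t$ along the flow. This forces the careful construction of the extended defining function $h^*$ and the auxiliary function $G$ that is uniformly convex in $p$ (referenced in Theorem~\ref{thm: extended defining functions} and Corollary~\ref{cor: construction of G}), and these objects must be built precisely so that barrier arguments survive the time derivative term in~\eqref{floweqn}. Everything else -- short-time existence, interior estimates, higher regularity, and the convergence argument -- is in principle routine once this boundary machinery is in hand.
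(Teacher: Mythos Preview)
Your strategy matches the paper's through the a priori estimates: short-time existence, preservation of $c$-convexity and of $T(\Omega)=\Omega^*$, the maximum-principle bound on $\dot u$, the gradient bound, uniform obliqueness, and the interior and boundary $C^2$ estimates all follow the same outline (you slightly misattribute which boundary theorem handles which direction, but that is cosmetic).

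The genuine gap is in your convergence argument. Neither mechanism you propose for $\dot u\to 0$ works as stated. The identity $\frac{d}{dt}\int_\Omega \dot u\, f\,dx=0$ is false, and the flow does \emph{not} preserve $T_\#\mu=\nu$; it only preserves $T(\Omega)=\Omega^*$. The actual conserved quantity is $\int_\Omega e^{\dot u} f\,dx=\int_\Omega f\,dx$, which is just the change-of-variables identity $\int_\Omega \det(DT)\,g(T)\,dx=\int_{\Omega^*}g$ combined with the mass balance~\eqref{balance}. Likewise, monotonicity of $\sup\dot u$ and $\inf\dot u$ (Theorem~\ref{tderivthm}) together with the strong maximum principle only gives that each has a limit; it does not force those limits to coincide, let alone to equal zero. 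The paper instead writes $v(x,t)=u(x,t)-u(x,t+t_0)$, checks (using the uniform $C^2$ and obliqueness estimates, and taking $t_0$ small) that $v$ satisfies a \emph{linear} uniformly parabolic equation with a uniformly oblique boundary condition, and then invokes the Schn\"urer--Smoczyk argument to obtain convergence of $u$ to a \emph{translating} solution $u^\infty(x)+C_\infty t$. Only at that point does the correct conserved quantity enter: passing to the limit in $\int_\Omega e^{\dot u}f=\int_{\Omega^*}g$ yields $e^{C_\infty}\int_\Omega f=\int_{\Omega^*}g$, hence $C_\infty=0$. Your sketch is missing both the oscillation-decay step (this is where the linear theory actually does work, not a bare strong maximum principle) and the correct role of mass balance in pinning down the limit.
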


\begin{rmk}\label{rmk:Tu and c-support functions}
Note that if $\phi(\cdot)$ is differentiable at some $x_0\in\Omega$, and $c(x, y_0)+\lambda_0$ is a $c$-support function to $\phi$ at $x_0$, we will have that $\nabla\phi(x_0)=\nabla_xc(x_0, y_0)$. Thus, by the uniqueness in~\eqref{twist}, we have that $y_0=Y(x_0, \nabla\phi(x_0))$. 
\end{rmk}

\section{Short-time Existence}
We will prove the existence of a solution to our equation~\eqref{eqn} up to some small time $\tmax>0$. First, we will prove some auxiliary results. We follow the definitions for H\"older spaces given in \cite{Lieberman:1996:SOP}.
\begin{defin}
Write $X=(x, t)$, $X_0=(x_0, t_0)\in\Omega \times I$ for some interval $I$, and $\lVert X\rVert =\max{(\lvert x\rvert, \lvert t\rvert^{\frac{1}{2}}})$. For a function $f$ defined on $\Omega \times I$, with $k$ a positive integer and $0<\alpha\leq 1$, we define the following norms and seminorms:
\begin{align*}
[f]_{C^{k+\alpha}(\Omega \times I)}&=\sum_{\lvert\beta\rvert+2j=k}\sup_{X\neq X_0\in\Omega \times I}{\frac{\lvert D^\beta_xD^j_t(f(X)-f(X_0))\rvert}{\lVert X-X_0\rVert^\alpha}},\\
\langle f\rangle_{C^{k+\alpha}(\Omega \times I)}&=\sum_{\lvert\beta\rvert+2j=k-1}\sup_{X_0,\ t\neq t_0\in I}{\frac{\lvert D^\beta_xD^j_t(f(x_0, t)-f(X_0))\rvert}{\lvert t-t_0\rvert^\frac{\alpha}{2}}},\\
\lvert f\rvert_{C^{k+\alpha}(\Omega \times I)}&=[f]_{k+\alpha}+\langle f\rangle_{k+\alpha}\\
\lVert f\rVert_{C^{k+\alpha}(\Omega \times I)}&=\sum_{\lvert \beta\rvert+2j\leq k}\sup\lvert D^\beta_xD^j_t f\rvert +\vert f\rvert_{k+\alpha}.
\end{align*}
We write $f\in C^{k+\alpha}(\Omega \times I)$ if $\lVert f\rVert_{C^{k+\alpha}(\Omega \times I)}<\infty$.
\end{defin} 
\begin{rmk}\label{rmk: regularity to the boundary}
By looking at sequences of points in $\Omega \times I$ approaching $\ombdry \times I$ or $\Omega \times \{0\}$, it is easy to see that if $f\in C^{k+\alpha}$, the $x$ derivatives up to order $k$ and the $t$ derivatives up to order $\frac{k}{2}$ of $f$ exist and are continuous up to $(\ombdry \times I) \cup (\Omega \times \{0\})$.
\end{rmk}

\begin{defin}
\begin{equation*}
\mathcal{K}=\left\{\phi\in C^{2+\alpha}(\Omega \times I)\ \vert\ \phi(\cdot, t)\text{ is locally, uniformly }c\text{-convex for each }t\in I\right\}.
\end{equation*}
\end{defin}
For this section, let us write 
\begin{equation*}
F(x, p, r)=\log{\det{(r_{ij} - A(x, p))}}-\log{B(x, p)}
\end{equation*}
well defined for $(x, p, r)$ in some subset of $\Omega \times \R^n \times \R^{n^2}$ (in particular, $F$ is well defined when $r=\hess\phi(x, t)$, $p=\gradphi(x, t)$ for $\phi \in \mathcal{K}$.)

By Theorem~\ref{obliquethm} below, the boundary condition~\eqref{bdrycond} is a nonlinear oblique condition. By a modification of the argument in \cite[Theorem 2.5.7]{Gerhardt:CurvatureProblems}, we can show short-time existence for our equation~\eqref{eqn}. The main difference is that the nonlinear boundary condition makes the set of admissible solutions not a Banach space, but a Banach manifold.\\

\begin{thm}\label{thm: short time existence}
For some $\tmax>0$, we can find a solution to~\eqref{eqn} in the class $C^{4, 2}(\omclose\times[0, \tmax))$. This notation means the solution $u$ is $C^4$ in the $x$ variables, and $C^2$ in the $t$ variable.
\end{thm}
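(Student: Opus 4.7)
The plan is to apply an implicit function theorem argument around the constant-in-time extension of $u_0$, modifying the approach of Gerhardt's Theorem 2.5.7 as the paper indicates. Concretely, I look for a zero of
\begin{equation*}
\Phi(u) = \bigl(\udot - F(x, \gradu, \hess u),\ \Gbar(x, \gradu)|_{\ombdry \times I},\ u|_{t=0} - u_0\bigr)
\end{equation*}
in a small $C^{2+\alpha}$-neighborhood of $u_0$, whose linearization at $u_0$ is the linear parabolic problem
\begin{equation*}
L v = \vdot - F_{r_{ij}}(u_0) v_{ij} - F_{p_i}(u_0) v_i, \qquad \beta^i(u_0) v_i \text{ on } \ombdry \times I, \qquad v|_{t=0} \text{ given}.
\end{equation*}

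Three ingredients make this linearization a well-posed oblique parabolic problem. First, $u_0 \in \mathcal{K}$ means $w_{ij}(u_0)$ is positive definite, so $F_{r_{ij}}(u_0) = w^{ij}(u_0)$ is uniformly positive definite on a small $C^{2+\alpha}$-ball around $u_0$ and for short times, giving uniform parabolicity of $L$. Second, Theorem~\ref{obliquethm} asserts $\inner{\beta(u_0)}{\nu} > 0$ on $\ombdry$, which is the obliqueness hypothesis required by linear parabolic Schauder theory. Third, the compatibility condition $\Gbar(x, \gradu_0) = 0$ on $\ombdry$ at $t=0$ is guaranteed by~\eqref{u_0 conditions}(b). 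Lieberman's linear theory in the parabolic Hölder spaces then supplies a bounded right inverse for $L$ on the appropriate function spaces.

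The modification relative to Gerhardt is that, because $\Gbar$ depends nonlinearly on $\gradu$, the admissible set $\{u : \Gbar(x, \gradu) = 0 \text{ on } \ombdry \times I\}$ is a Banach submanifold rather than a linear subspace of $C^{2+\alpha}$: obliqueness makes $u \mapsto \Gbar(\cdot, \gradu)|_{\ombdry \times I}$ a local submersion at $u_0$, whose kernel realizes the tangent space $\{v : \beta^i(u_0) v_i = 0 \text{ on } \ombdry \times I\}$ at $u_0$. I would work in local charts on this manifold and apply the implicit function theorem between chart and target. This produces a $C^{2+\alpha}$ solution on $[0, t_{max}]$ for some $t_{max} > 0$. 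Bootstrapping by differentiating the equation in $x$ and $t$ and applying parabolic Schauder estimates iteratively, permitted by the $C^{4+\alpha}$ regularity of $c$ from~\eqref{A0}, then upgrades the solution to $C^{4,2}(\omclose \times [0, t_{max}))$.

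The main obstacle I anticipate is the careful Banach manifold setup: choosing a chart at $u_0$ so that the linearization between chart and codomain is exactly $L$, and ensuring that compatibility between initial and boundary data is preserved through the fixed-point iteration. Once that bookkeeping is in place, everything reduces to standard linear oblique parabolic Schauder theory together with the algebraic facts $w_{ij}(u_0) > 0$ and $\inner{\beta}{\nu} > 0$ that the paper has already put in place.
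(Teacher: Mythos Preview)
Your overall framework matches the paper's: an inverse function theorem argument in Banach manifolds, with parabolicity coming from $w^{ij}(u_0)>0$, obliqueness from Theorem~\ref{obliquethm}, and the linear oblique parabolic Schauder theory of Lieberman providing invertibility of the linearization. The paper also handles the Banach manifold issue created by the nonlinear boundary condition, though it places the manifold structure on the \emph{codomain} (the set $\mathcal{B}$ of compatible pairs $(v,w)$ with $\Gbar(x,\nabla v)=w(\cdot,0)$ on $\ombdry$) rather than on the domain as you suggest.

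There is, however, a genuine gap in your plan at the point where the inverse function theorem is supposed to \emph{produce} a solution. If you linearize at the constant-in-time extension of $u_0$, then $\Phi(u_0)=\bigl(-F(x,\nabla u_0,\nabla^2 u_0),\,0,\,0\bigr)$, and the first component is an $O(1)$ function in $C^{\alpha}$ that does \emph{not} vanish at $t=0$. The local inverse furnished by the IFT only covers a small neighborhood of $\Phi(u_0)$; you give no mechanism for showing that the desired target $(0,0,0)$ lies in that neighborhood, and shrinking the time interval does not make $\lVert F(u_0)\rVert_{C^{\alpha}}$ small.

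The paper resolves this with two additional steps you do not mention. First, it constructs an auxiliary function $\util$ solving the \emph{linear} heat equation $\Delta\util-\utildot=\Delta u_0-F(x,\nabla u_0,\nabla^2 u_0)$ with the same nonlinear oblique boundary condition and initial data $u_0$; this exists by Lieberman's quasilinear theory and lies in $\mathcal{K}$ for short time. The point is that now the ``error'' $\ftil:=F(x,\nabla\util,\nabla^2\util)-\utildot$ satisfies $\ftil|_{t=0}=0$. Second, the paper introduces a time cutoff $\eta_\delta$ and shows, by a careful H\"older interpolation using $\ftil|_{t=0}=0$, that $\lVert\eta_\delta\ftil-\ftil\rVert_{C^{\alpha/4}}\to 0$ as $\delta\to 0$. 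Hence $(\eta_\delta\ftil,u_0,0)$ lies in the image of the local inverse around $\util$, producing $u$ with $F(x,\nabla u,\nabla^2 u)-\udot=\eta_\delta\ftil$; since $\eta_\delta\equiv 0$ on $[0,\delta)$, this $u$ solves the original problem there. Your proposal needs either this auxiliary-plus-cutoff device or an explicit alternative (e.g.\ weighted spaces making the linear solution operator small as $T\to 0$) to close the argument.
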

\begin{proof}
Consider $\util$, the solution to the problem
\begin{equation*}
\begin{cases}{\label{shorttime eqn}}
\Delta\util-\utildot = \Delta u_0-F(x, \gradu_0, \hess{u_0}), &x\in\Omega\\
\Gbar(x, \nabla\util(x, t))=0,& x\in\ombdry\\
\util\vert_{t=0}=u_0.
\end{cases}
\end{equation*}
By Theorem~\ref{obliquethm}, we have $\inner{\Gbar_p}{\nu}>0$, thus on $[0, \epsilon)$ for some small $\epsilon$, we find 
\begin{equation*}
\inner{\Gbar_p}{\nu}\geq C_\epsilon>0.
\end{equation*}
Also, by assumption $\Gbar(x, \gradu_0(x))=0$ for $x\in\ombdry$, so by \cite[Theorems 8.8 and 8.9]{Lieberman:1996:SOP} there exists a solution $\util \in C^{2+\alpha}(\Omega \times [0, \epsilon))$ (for a possibly smaller $\epsilon >0$). In particular, $\lVert \util(\cdot, t)-u_0(\cdot)\rVert_{C^2}<C\lvert t\rvert^\alpha$ for each $t$. Thus, since $u_0$ is locally, uniformly $c$-convex by assumption, (by making $\epsilon$ smaller if necessary) we can ensure that $\util \in\mathcal{K}$. Writing $\wtil^{ij}$ for the matrix inverse of $\util_{ij}-A_{ij}(x, \nabla \util)$ we see that $\wtil^{ij}$ is positive definite for $t<\epsilon$ and $\wtil^{ij}\xi_i\xi_j\geq C_ \epsilon\lvert\xi\rvert^2$ for some $C_\epsilon>0$. Define
\begin{equation*}
\ftil(x, t):=F(x, \nabla\util, \hess{\util})-\utildot(x, t).
\end{equation*}
Note that $\ftil\in C^\alpha(\Omega \times [0, \epsilon))$ and $\ftil\vert_{t=0}=0$.

We now define the following sets for $0< \delta < \frac{\epsilon}{2}$: 
\begin{equation*}
\Bdel:=\{\phi\in C^{2+\frac{\alpha}{4}}(\Omega \times [0, \epsilon))\ \lvert\ \lVert \phi-\util\rVert_{C^{2+\frac{\alpha}{4}}(\Omega \times [0, \epsilon))}<\delta\}
\end{equation*}
and
\begin{equation*}
\Bdel^*:=C^{\frac{\alpha}{4}}(\Omega \times [0, \epsilon))\times \mathcal{B}
\end{equation*}
where
\begin{equation*}
\mathcal{B}:=\{(v, w)\in C^{2+\frac{\alpha}{4}}(\Omega \times \{0\})\times C^{1+\frac{\alpha}{4}}(\ombdry \times [0, \epsilon))\ \vert\ \Gbar(x, \nabla v(x))=w(x, 0),\ \forall x\in\ombdry\}
\end{equation*}
Clearly, $\Bdel$ is a $C^1$ Banach manifold with charts diffeomorphic to subsets of $C^{2+\frac{\alpha}{4}}(\Omega \times [0, \epsilon))$.

We also claim that $\mathcal{B}$ is a Banach manifold. Consider the map $H: C^{2+\frac{\alpha}{4}}(\Omega \times \{0\})\times C^{1+\frac{\alpha}{4}}(\ombdry \times [0, \epsilon)) \to C^{1+\frac{\alpha}{4}}(\ombdry \times \{0\})$ given by
\begin{equation*}
H(v, w) := \Gbar(x, \nabla v(x))-w(x, 0).
\end{equation*}
We see that the differential
\begin{equation*}
D_{(v_0, w_0)}H(v, w) = \inner{\Gbar_p(x, \nabla v_0(x))}{\nabla v}-w(x, 0)
\end{equation*}
is clearly onto $C^{1+\frac{\alpha}{4}}(\ombdry \times \{0\})$ for each $(v_0, w_0)\in\mathcal{B}$. Also, we consider the map $P: C^{2+\frac{\alpha}{4}}(\Omega \times \{0\})\times C^{1+\frac{\alpha}{4}}(\ombdry \times [0, \epsilon))\to \mathcal{N}(D_{(v_0, w_0)}H(v, w))$, the kernel of $D_{(v_0, w_0)}H(v, w)$, defined by
\begin{equation*}
P(v, w):=(v, w+\phi_{v, w})
\end{equation*}
where 
\begin{equation*}
\phi_{v, w}(x, t)=\inner{\Gbar_p(x, \nabla v_0(x))}{\nabla v(x, 0)}-w(x, 0)\text{, for all }(x, t)\in \ombdry \times (0, \epsilon).
\end{equation*}
Since $P$ is a continuous linear projection, $\mathcal{N}(D_{(v_0, w_0)}H(v, w))$ splits the full tangent space of $C^{2+\frac{\alpha}{4}}(\Omega \times \{0\})\times C^{1+\frac{\alpha}{4}}(\ombdry \times [0, \epsilon))$ at $(v_0, w_0)$ and we have that $(v_0, w_0)$ is a regular point of $H$. Thus, $\mathcal{B}$ is the inverse image of a regular value of $H$, so by \cite[Theorem 73C]{Zeidler:1988:NonlinearFunctionalV4} is also a Banach manifold with tangent space at $(v_0, w_0)$ equal to $\mathcal{N}(D_{(v_0, w_0)}H(v, w))$. Hence $\Bdel^*$ is also a $C^1$ Banach manifold.

We now define the map 
\begin{equation*}
\Phi: \Bdel \to \Bdel^*
\end{equation*}
by
\begin{equation*}
\Phi(u)=(F(x, \gradu, \hess{u})-\udot,\ u\vert_{t=0},\ \Gbar(x, \gradu)).
\end{equation*}
By the uniform continuity of $c_{ij}$, we can see that for a function $v\in C^2(\Omega)$, if $\hess v-A(x, Y(x, \nabla v))$ is positive definite, it remains so on a small neighborhood of $v$ in the $C^2(\Omega)$ norm. Hence, if $\delta$ is sufficiently small, any convex combination of $\util$ and $\phi\in \Bdel$ will remain in $\mathcal{K}$. Thus we find that the differential of $\Phi$ at $\util$ is given by
\begin{align*}
D_{\util}\Phi(\phi)&=(\wtil^{ij}(\phi_{ij}-\Dp A_{ij}(x, \nabla\util)\phi_k)-\Dp \Btil(x, \nabla \util)\phi_k-\phidot, \\
&\qquad\phi\vert_{t=0},\ \inner{\Gbar_p(x, \nabla \util)}{\nabla\phi}).
\end{align*}

Since by above, the tangent space to $\Bdel^*$ at $\Phi(\util)$ is $C^{\frac{\alpha}{4}}(\Omega \times [0, \epsilon))\times \mathcal{N}(D_{(\Phi(\util))}H)$, we can use \cite[Theorem 5.18]{Lieberman:1996:SOP} to find that $D_{\util}\Phi$ is bijective on its tangent space (the condition that a pair be in the kernel of the differential of $H$ is exactly the required compatibility condition between the initial and boundary conditions to obtain short time existence for a linear parabolic equation). Hence, by the inverse function theorem for Banach manifolds, $\Phi$ is invertible on some small neighborhood of $\util\in\Bdel$.

Now, taking a smooth cutoff function $\eta$ in the variable $t$ such that 
\begin{equation*}
\eta_\delta(t)=
\begin{cases}
0,&\ 0\leq t<\delta\\
1,&\ 2\delta\leq t
\end{cases}
\end{equation*}
with $\lvert \etadot_\delta\rvert \leq \frac{C}{\delta}$, consider the function $\ftil_\delta(x, t) :=\eta_\delta(t)\ftil(x, t)$. Since $\ftil\vert_{t=0}=0$, we can make $\displaystyle \sup_{X\in\Omega \times [0, \epsilon)}{\vert \ftil_\delta-\ftil\rvert}$ small by taking $\delta$ small. On the other hand, we calculate that
\begin{align*}
\lvert \ftil_\delta-\ftil\rvert_{C^\frac{\alpha}{4}(\Omega \times [0, \epsilon))}&\leq \lvert \ftil\rvert_{C^\frac{\alpha}{4}(\Omega \times [0, \delta))}+ C\lvert \eta_\delta-1\rvert_{C^\frac{\alpha}{4}(\Omega \times [\delta, 2\delta))}\lvert \ftil\rvert_{C^\frac{\alpha}{2}(\Omega \times [\delta, 2\delta))}
\end{align*}
where $C$ is a constant depending only on $\Omega$ and $\lVert\ftil\rVert_{C^{\alpha}(\Omega \times [0, \epsilon))}$. Since
\begin{equation*}
\lvert \ftil(x, t)\rvert =\lvert \ftil(x, t)-\ftil(x, 0)\rvert \leq Ct^\alpha,
\end{equation*} 
we have
\begin{align*}
\lvert \ftil\rvert_{C^\frac{\alpha}{4}(\Omega \times [0, \delta))}&\leq\sup\frac{\lvert\ftil(x, t)-\ftil(x_0, t_0))\rvert}{\lvert x-x_0\rvert^{\frac{\alpha}{4}}}+\sup\frac{\lvert\ftil(x, t)-\ftil(x_0, t_0))\rvert}{\lvert t-t_0\rvert^{\frac{\alpha}{8}}}\\
&\leq\sup\left(\frac{\lvert\ftil(x, t)-\ftil(x_0, t_0))\rvert}{\lvert x-x_0\rvert^\alpha}\lvert\ftil(x, t)-\ftil(x_0, t_0))\rvert^3\right)^\frac{1}{4}\\
&\qquad+\sup\frac{\lvert\ftil(x, t)-\ftil(x_0, t_0))\rvert}{\lvert t-t_0\rvert^{\frac{\alpha}{2}}}\lvert t-t_0\rvert^{\frac{3\alpha}{8}}\\
&\leq C(\delta^{\frac{3\alpha}{4}}+\delta^{\frac{3\alpha}{8}}),
\end{align*}
while by a similar calculation,
\begin{align*}
\lvert \eta_\delta-1\rvert_{C^\frac{\alpha}{4}(\Omega \times [\delta, 2\delta))}\lvert \ftil\rvert_{C^\frac{\alpha}{2}(\Omega \times [\delta, 2\delta))}&
\leq C\left(\sup\frac{\lvert\eta_\delta(t)-\eta_\delta(t_0))\rvert}{\lvert t-t_0\rvert^{\frac{\alpha}{8}}}\right)(\delta^{\frac{\alpha}{2}}+\delta^{\frac{\alpha}{4}})\\
&\leq C\left(\sup\frac{\lvert\etadot\rvert(t-t_0)}{\lvert t-t_0\rvert^{\frac{\alpha}{8}}}\right)(\delta^{\frac{\alpha}{2}}+\delta^{\frac{\alpha}{4}})\\
&\leq C\delta^{-1}\delta^{1-\frac{\alpha}{8}}(\delta^{\frac{\alpha}{2}}+\delta^{\frac{\alpha}{4}})
\end{align*}
Thus for a small enough $\delta>0$, $\eta_\delta\ftil$ is arbitrarily close to $\ftil$ in $C^{\frac{\alpha}{4}}(\Omega \times [0, \epsilon))$. This gives a $u\in C^{2+\frac{\alpha}{4}}(\Omega \times [0, \epsilon))$ such that
\begin{equation*}
\begin{cases}
F(x, \gradu, \hess{u})=\eta_\delta\ftil, &x\in\Omega\\
\Gbar(x, \gradu(x, t))=0, &x\in\ombdry\\
u\vert_{t=0}=u_0.&
\end{cases}
\end{equation*}
Hence, by taking $\tmax < \delta$, we see that $u$ is the solution to our original problem~\eqref{eqn} on $[0, \tmax)$ in the space $C^{2+\frac{\alpha}{4}}(\Omega \times [0, \tmax))$. Thus by Remark~\eqref{rmk: regularity to the boundary}, we have the desired regularity.
\end{proof}

From here on, we assume that $u$ exists with this regularity for $t\in[0, \tmax)$.

\section{Preliminary results}
We will show a few preliminary results before proceeding to show the estimates necessary for long term existence.\\

\begin{rmk}\label{rmk: DT is nondegenerate}
By implicit differentiation, we see that $D_iT^j(x, t)=c^{j, k}w_{ki}(x, T(x, t))$ and thus $\det{DT}=\frac{f}{g\circ T}e^{\udot}$ from~\eqref{floweqn}. In particular, $\det{DT}\neq0$ for all $x\in\omclose$ and $t\in[0, \tmax)$. We will use this fact frequently.
\end{rmk}
\begin{rmk}\label{rmk: formulae for inverse of c_x,y}
We will also make use of the following formulas obtained by a simple differentiation:
\begin{align*}
c^{l, k}_i&=-c_{ip, q}c^{l, p}c^{q, k}\\
c^{l, k}_{\ ,i}&=-c_{p, iq}c^{l, p}c^{q, k}
\end{align*}  
\end{rmk}

\begin{lemma}\label{lemma: stays strictly c-convex}
If $u_0$ satisfies the conditions~\eqref{u_0 conditions}, the solution $u(\cdot, t)$ to~\eqref{eqn} is both locally, uniformly $c$-convex and strictly $c$-convex for $0\leq t<\tmax$. In particular, $w_{ij}$ remains positive definite for $0\leq t<\tmax$.
\end{lemma}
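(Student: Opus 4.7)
I would begin with the first assertion (positive definiteness of $w_{ij}$), argued by continuity in $t$. At $t=0$, $w_{ij}(\cdot,0)$ is positive definite by the assumption on $u_0$, and $u \in C^{4,2}(\omclose \times [0,\tmax))$ ensures $w_{ij}$ is continuous in $t$. Set $t^* := \sup\{t \in [0,\tmax) : w_{ij}(\cdot,s) \text{ is positive definite on } \omclose \text{ for all } s \in [0,t]\}$; supposing $t^* < \tmax$, continuity gives some $x^* \in \omclose$ with $\det w(x^*,t^*) = 0$. But rearranging~\eqref{floweqn} produces $\det w = B(x,\gradu)\, e^{\udot}$, and on the compact set $\omclose \times [0,t^*]$, $\udot$ is bounded (by $C^{4,2}$ regularity) while $B$ is bounded below by a positive constant (via~\eqref{densitybound} and~\eqref{nondeg}), yielding a uniform positive lower bound on $\det w$---a contradiction.

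For strict $c$-convexity, I would first argue that $T(\cdot,t)\colon \omclose \to \omstarclose$ is a diffeomorphism with $T(\Omega,t) = \omstar$ for every $t \in [0,\tmax)$. By Remark~\ref{rmk: DT is nondegenerate}, $\det DT \neq 0$, so $T(\cdot,t)$ is a local diffeomorphism; the boundary condition~\eqref{bdrycond} forces $T(\ombdry,t) \subset \omstarbdry$, so $T(\cdot,t)\colon \Omega \to \omstar$ is proper. The $c^*$-convexity of $\omstar$ with respect to $\Omega$ identifies $\omstar$ with the convex open set $\nabla_x c(x,\omstar)$ via $\nabla_x c(x,\cdot)$, so $\omstar$ is simply connected. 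Hence at $t=0$, $T_0$ is a surjective proper local diffeomorphism onto a simply connected target, and so a diffeomorphism of degree $\pm 1$. By continuity of $T$ in $t$ and the fact that $T(\ombdry,t) \cap \omstar = \emptyset$, the degree of $T(\cdot,t)$ at any $y \in \omstar$ is constant in $t$, so $|\deg|=1$; together with the constant sign of $\det DT$ on $\omclose \times [0,\tmax)$, this forces exactly one preimage at each $y \in \omstar$, making $T(\cdot,t)$ a diffeomorphism.

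Now fix $x_0 \in \Omega$ and $t \in [0,\tmax)$; set $y_0 := T(x_0,t) \in \omstar$, $\lambda_0 := u(x_0,t) - c(x_0,y_0)$, and $\phi(x) := u(x,t) - c(x,y_0) - \lambda_0$. By Remark~\ref{rmk:Tu and c-support functions}, $\nabla\phi(x_0) = 0$; also $\hess\phi(x_0) = w_{ij}(x_0,t)$ is positive definite and $\phi(x_0) = 0$, so $x_0$ is a strict local minimum. Let $x_*$ minimize $\phi$ on $\omclose$. If $x_* \in \Omega$, then $\nabla\phi(x_*) = 0$ gives $T(x_*,t) = y_0 = T(x_0,t)$, and injectivity forces $x_* = x_0$. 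If $x_* \in \ombdry$, the tangential gradient vanishes and the outer-normal component is non-positive, so $\nabla u(x_*,t) = \nabla_x c(x_*,y_0) + a\,\nu(x_*)$ for some $a \leq 0$. The case $a = 0$ yields $T(x_*,t) = y_0 \in \omstar$, contradicting $T(x_*,t) \in \omstarbdry$. For $a < 0$, consider $L(s) := \nabla_x c(x_*,y_0) + s\,\nu(x_*)$ on $[a,0]$: it joins the interior point $L(0)$ to the boundary point $L(a)$ (since $Y(x_*,L(a)) = T(x_*,t) \in \omstarbdry$) of the convex set $\nabla_x c(x_*,\omstar)$, so $L([a,0])$ stays inside the set. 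Hence $F(s) := \hstarbar(Y(x_*,L(s))) \leq 0$ on $[a,0]$ with $F(a) = 0$, forcing $F'(a) \leq 0$; yet a direct computation using $\partial_{p_k} Y^l = c^{l,k}$ yields $F'(a) = \nustar_l\, c^{l,k}\, \nu_k = \beta(x_*,t) > 0$ by the obliqueness of Theorem~\ref{obliquethm}, a contradiction. Thus $\phi \geq 0$ on $\omclose$ with equality only at $x_0$, giving strict $c$-convexity.

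The main obstacle is the boundary case of the minimum argument: the contradiction is produced only by a precise interplay between the $c^*$-convexity of $\omstar$ (which keeps $F$ non-positive along the segment $L$) and the obliqueness of $\Gbar$ (which produces the strict sign $F'(a) > 0$). A secondary subtlety is the global diffeomorphism property of $T(\cdot,t)$, which relies on degree theory built on top of the initial condition $T_0(\Omega) = \omstar$ and the simple connectedness of $\omstar$ inherited from its $c^*$-convexity.
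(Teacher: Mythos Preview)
Your proof is correct and takes a genuinely different route from the paper. Both arguments handle positive-definiteness of $w_{ij}$ identically via $\det w=Be^{\udot}>0$. For strict $c$-convexity, the paper first shows that ``$c$-convex but not strictly $c$-convex'' is impossible by invoking a Loeper-type maximum principle \cite[Theorem~3.2]{MR2506751} (which relies on~\eqref{MTW}), and then runs an open--closed continuity argument in $t$ starting from strict $c$-convexity of $u_0$ --- a property not actually listed in~\eqref{u_0 conditions} and left unjustified there. You instead establish directly that $T(\cdot,t):\Omega\to\omstar$ is a diffeomorphism via degree theory, and then obtain strict $c$-convexity by minimizing $\phi=u-c(\cdot,y_0)-\lambda_0$ over $\omclose$, dispatching the boundary case through the interplay of obliqueness (needing only $w>0$, which you have already secured) and $c^*$-convexity of $\omstar$. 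One detail worth making explicit in your degree step: to conclude $T(\Omega,t)=\omstar$ rather than merely that each $y\in\omstar$ has a unique preimage, note that $\deg(T(\cdot,t),\Omega,y)=0$ for $y\notin\omstarclose$ as well (by homotopy invariance from $t=0$, where $T_0(\omclose)=\omstarclose$), so $T(\Omega,t)\subset\omstarclose$, and openness of $T(\Omega,t)$ then forces equality. Your argument is more self-contained, does not use~\eqref{MTW} at all for this lemma, and incidentally supplies the global $c$-convexity of $u_0$ that the paper's continuity argument tacitly assumes; the paper's approach, by contrast, isolates the implication ``$c$-convex $\Rightarrow$ strictly $c$-convex under~\eqref{MTW}'' as a standalone fact of independent interest.
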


\begin{proof}
By the assumption on the initial condition $u_0$, we see that $w_{ij}$ remains positive definite for at least some small time. Then, since $\det{w_{ij}}=Be^{\udot}$, by conditions~\eqref{nondeg} and~\eqref{densitybound} we see that $w_{ij}$ cannot have $0$ for an eigenvalue, i.e. it will remain positive definite as long as the solution exists, proving the claim of local, uniform $c$-convexity.

Now, suppose that for some $t$, $u(\cdot, t)$ is $c$-convex but not strictly $c$-convex, i.e. there exists $x_1\neq x_2$ such that $c(\cdot, \ytil)+\lambdatil$, for some $\ytil$ and some $\lambdatil\in\R$ is a $c$-support function to $u(\cdot, t)$ at both $x_1$ and $x_2$.  Define $\vtil(x, t)=u(x, t) -(c(x, \ytil)+ \lambdatil)$. By local, uniform $c$-convexity we see that $\hess\vtil(x_1)=(w_{ij}(x_1))>0$, while by the definition of $T$ and Remark~\ref{rmk:Tu and c-support functions}, we have that $\nabla \vtil =0$ at $x=x_1$. This implies that $\vtil(x_1, t)=0$ is a strict local minimum on some neighborhood of $x_1$. By the continuity of the map $X(q, y)$ in~\eqref{twist}, we can pick a point $x_0$ on the $c$-segment with respect to $\ytil$ between $x_1$ and $x_2$ that lies in this neighborhood of $x_1$. The $c$-convexity of $\Omega$ with respect to $\omstar$ ensures that $x_0\in\Omega$. Since $u(\cdot, t)$ is $c$-convex, there is a $y_0\in\omstar$ and a $\lambda_0\in\R$ so that $c(x, y_0)+\lambda_0$ is a $c$-support function at $x_0$. 
In particular, 
\begin{equation*}
c(x, \ytil)+\lambdatil =u(x) \geq c(x, y_0)+\lambda_0\text{ at }x=x_1,\ x_2.
\end{equation*}
Since we assume condition~\eqref{MTW}, we can use  \cite[Theorem 3.2] {MR2506751}, to show that $\lambda_0-\lambdatil\leq\phibar(\ytil)\leq c(x_0, \ytil)-c(x_0, y_0)$, where
\begin{equation*}
\phibar(y):=\min{\{c(x_1, y)-c(x_1, y_0), c(x_2, y)-c(x_2, y_0)\}}
\end{equation*}
(recalling that $x_0$ lies on the $c$-segment with respect to $\ytil$ between $x_1$ and $x_2$). We make note that, due to the fact that the sign of the potential function in \cite {MR2506751} is the negative of ours, and the $x$ and $y$ variables are reversed in the theorem, the statement becomes: 
\begin{equation*}
\phibar(y) \leq c(x_\theta, y)-c(x_\theta, y_0)
\end{equation*}
for any $x_\theta\in\Omega$ lying on the $c$-segment with respect to $y_0$ connecting $x_1$ and $x_2$ and any $y\in\omstar$. But then, $u(x_0, t)=c(x_0, y_0)+\lambda_0\leq c(x_0, \ytil)+\lambdatil$, contradicting that $x_1$ is a strict local minimum for $\vtil$. Thus $u(\cdot, t)$ is actually strictly $c$-convex.

Now consider the function 
\begin{equation*}
v(x,x_0, t)=u(x, t)-(c(x, T(x_0, t))+u(x_0)-c(x_0, T(x_0, t)))
\end{equation*}
defined on $\Omega \times \Omega \times [0, \tmax)$. Take $t\leq\tbar < \tmax$, then $(w_{ij}(x, t))\geq C\delta_{ij}$ as a matrix, for some $C$ depending on $\tbar$ but independent of $x$ and $t$. Then, since $w_{ij}(x, t)=\hessx{v(x, x_0, t)}$ at $x=x_0$, we have that
\begin{align*}
v(x, x_0, t)&\geq v(x_0, x_0, t)+\inner{\nabla_x v(x_0, x_0, t)}{x-x_0}\\
&\qquad+\inner{\hessx v(x_0, x_0, t)(x-x_0)}{x-x_0} - \lVert v \rVert_{C^3}\lvert x-x_0\rvert^3\\
&\geq 0+0+C\lvert x-x_0\rvert^2- \lVert v \rVert_{C^3}\lvert x-x_0\rvert^3\\
&> 0
\end{align*}
as long as $0<\lvert x-x_0\rvert < \delta$, for $\delta$ depending only on $C$ and 
\begin{equation*}
\lVert v \rVert_{C^3}=\sup_{i,j,k}\sup_{(x, t)\in \omclose\times[0, \tbar]} \lvert \partial^3_{x_ix_jx_k}v(x_0, x, t)\rvert.
\end{equation*}
Now, if $\lvert x-x_0\rvert \geq \delta$, by the strict $c$-convexity of $u_0$ we have
\begin{align*}
v(x, x_0, 0)&=u_0(x)-c(x, T_0(x_0))-u_0(x_0)+c(x_0, T_0(x_0))\\
&\geq K(x_0)>0\\
\end{align*}
where $\displaystyle K(x_0) := \inf_{x}{v(x, x_0, 0)}$. By the continuity of $u_0$, $T_0$, and $c$ we have that $K(x_0)$ is actually a continuous function of $x_0$. Thus by boundedness of $\Omega$, we have that $\displaystyle \inf_{x_0\in\omclose}K(x_0) \geq K_0$ for some constant $K_0>0$. Now by the continuity of $v$, for some sufficiently small $\epsilon >0$, we have that $\lvert v(x, x_0, t)-v(x, x_0, 0)\rvert < \frac{K_0}{2}$. Thus, for $0\leq t< \epsilon$, we have
\begin{align*}
v(x, x_0, t)&>0\text{, for }0<\lvert x-x_0\rvert< \delta\\
v(x, x_0, t)&\geq v(x, x_0, 0)-\lvert v(x, x_0, t)-v(x, x_0, 0)\rvert \\
&\geq K_0-\frac{K_0}{2}=\frac{K_0}{2}>0\text{, for }\lvert x-x_0\rvert\geq \delta.
\end{align*}
We can see this implies that $u(\cdot, t)$ remains strictly $c$-convex for $0\leq t < \epsilon$.\\
Taking $\ttil=\sup\{t<\tmax \vert u(\cdot, \tbar)\text{ is strictly } c\text{-convex for } \tbar<t\}$, by the argument above, $\ttil > 0$. If $\ttil<\tmax$ we have that $u(\cdot, \ttil)$ is $c$-convex but not strictly $c$-convex, which is a contradiction. Thus we have that $\ttil =\tmax$ and $u(\cdot, t)$ remains strictly $c$-convex on $0\leq t<\tmax$.\\
\end{proof}
\begin{cor}\label{optimalonetoone}
Under the assumptions of Lemma~\ref{lemma: stays strictly c-convex} above, the map $T(x, t)=Y(x, \gradu(x, t))$ is one-to-one on $\omclose$ for each $0\leq t<\tmax$.
\end{cor}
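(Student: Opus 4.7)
I would argue by contradiction, invoking the strict $c$-convexity just proven in Lemma~\ref{lemma: stays strictly c-convex}. Suppose that $T(x_1, t) = T(x_2, t) = y_0$ for some distinct $x_1, x_2 \in \omclose$ and some $t \in [0, \tmax)$. First, since $u(\cdot, t)$ is $C^2$ up to the boundary by Theorem~\ref{thm: short time existence} and Remark~\ref{rmk: regularity to the boundary}, Remark~\ref{rmk:Tu and c-support functions} yields $\gradu(x_i, t) = \nabla_x c(x_i, y_0)$ and identifies the candidate $c$-support function at $x_i$ as $c(\cdot, y_0) + \lambda_i$ with $\lambda_i := u(x_i, t) - c(x_i, y_0)$.

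Next, I would reuse the auxiliary function
\[
v(x, x_0, t) = u(x, t) - c(x, T(x_0, t)) - u(x_0, t) + c(x_0, T(x_0, t))
\]
introduced in the proof of Lemma~\ref{lemma: stays strictly c-convex}, which that proof shows satisfies $v(x, x_0, t) > 0$ whenever $x \neq x_0$. Taking the base point $x_0 = x_1$ and evaluating at $x = x_2$, and then swapping the roles, produces
\begin{align*}
u(x_2, t) - c(x_2, y_0) &> u(x_1, t) - c(x_1, y_0), \\
u(x_1, t) - c(x_1, y_0) &> u(x_2, t) - c(x_2, y_0),
\end{align*}
which are incompatible. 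This contradiction forces $T(\cdot, t)$ to be injective on $\omclose$.

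The one point requiring attention — and the only place I expect any technical care — is that the strict positivity of $v$ in Lemma~\ref{lemma: stays strictly c-convex} is phrased for base points $x_0 \in \Omega$, while a priori $x_1$ or $x_2$ could lie on $\ombdry$. However, every ingredient in that lemma's argument (the uniform lower bound on $(w_{ij})$ coming from local uniform $c$-convexity, the $C^3$ control of $v$, and the compactness of $\omclose$) continues to hold with $x_0 \in \omclose$ thanks to the $C^2$ regularity of $u(\cdot, t)$ up to $\ombdry$, so $v(x, x_0, t) > 0$ in fact holds for every $x_0 \in \omclose$ and every $x \neq x_0$; alternatively, one can approximate any boundary base point by interior points and pass to the limit in one of the two strict inequalities above, which still suffices to contradict the other. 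Either way, the corollary follows as a clean consequence of strict $c$-convexity, and no substantial obstacle is expected.
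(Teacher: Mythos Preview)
Your proposal is correct and follows essentially the same route as the paper: assume $T(x_1,t)=T(x_2,t)=y_0$, use strict $c$-convexity at each of $x_1$, $x_2$ to produce two incompatible inequalities between $u(x_1,t)-c(x_1,y_0)$ and $u(x_2,t)-c(x_2,y_0)$, and conclude. The only cosmetic difference is that the paper works with the constants $\lambda_i=u(x_i,t)-c(x_i,y_0)$ directly (first using non-strict $c$-convexity to force $\lambda_1=\lambda_2$, then invoking strictness), whereas you package the same computation in the auxiliary function $v$; your remark about extending the base point $x_0$ to $\omclose$ is a reasonable bit of extra care that the paper leaves implicit.
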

\begin{proof}
Suppose that $\ytil=T(x_1, t)=T(x_2, t)$ for some $x_1\neq x_2$. By Remark~\ref{rmk:Tu and c-support functions}, we have that $c(x, \ytil)+\lambda_1$ and $c(x, \ytil)+\lambda_2$ are $c$-support functions to $u(\cdot, t)$ at $x_1$ and $x_2$ respectively, for some $\lambda_1$, $\lambda_2\in\R$. Thus
\begin{align*}
c(x_2, \ytil)+\lambda_1&\leq u(x_2, t)=c(x_2, \ytil)+\lambda_2\\
c(x_1, \ytil)+\lambda_2&\leq u(x_1, t)=c(x_1, \ytil)+\lambda_1
\end{align*}
and hence $\lambda_1=\lambda_2$. This contradicts the strict $c$-convexity of $u(\cdot, t)$, and we have that $T(\cdot, t)$ is a one-to-one function for each $t<\tmax$.
\end{proof}

\begin{cor}\label{cor: T is onto}
Under the assumptions of Lemma~\ref{lemma: stays strictly c-convex}, $T(\Omega, t)=\omstar$ for $0\leq t<\tmax$.
\end{cor}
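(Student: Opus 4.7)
The plan is to combine the boundary condition with a continuity-and-degree argument, leveraging three facts already established: $T(\cdot, t)$ is one-to-one on $\omclose$ (Corollary~\ref{optimalonetoone}), $DT(\cdot, t)$ is nonsingular on $\omclose$ (Remark~\ref{rmk: DT is nondegenerate}), and $(x, t)\mapsto T(x, t)$ is continuous on $\omclose\times[0,\tmax)$ by the regularity given in Theorem~\ref{thm: short time existence}.

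First I would extract from the boundary condition~\eqref{bdrycond}, which reads $\hstarbar(T(x,t))=0$ for $x\in\ombdry$, together with the fact that the normalized defining function $\hstarbar$ vanishes precisely on $\omstarbdry$, the inclusion $T(\ombdry, t)\subset\omstarbdry$ for every $t\in[0,\tmax)$. Combined with injectivity this gives $T(\Omega, t)\cap\omstarbdry=\emptyset$, and the nondegenerate Jacobian makes $T(\Omega, t)$ open in $\R^n$. Consequently $T(\Omega, t)$ is an open subset of $\R^n\setminus\omstarbdry$ whose topological boundary lies inside $T(\ombdry, t)\subset\omstarbdry$.

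Next, for any $y_0\in\omstar\cup(\R^n\setminus\omstarclose)$ the Brouwer degree $\deg(T(\cdot, t),\Omega, y_0)$ is well-defined (since $y_0\notin T(\ombdry, t)$) and is constant in $t\in[0,\tmax)$ by homotopy invariance. At $t=0$ the conditions~\eqref{u_0 conditions} make $T_0$ a continuous injection of $\omclose$ into $\omstarclose$ with $T_0(\Omega)=\omstar$; compactness forces $T_0(\omclose)\supset\omstarclose$ while continuity yields the reverse, so $T_0$ is a homeomorphism of compact Hausdorff spaces and its degree is $\pm 1$ on points of $\omstar$ and $0$ on points of $\R^n\setminus\omstarclose$. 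Applying constancy in $t$, every $y_0\in\omstar$ has a (unique, by injectivity) preimage in $\Omega$ under $T(\cdot, t)$, and no $y_0\in\R^n\setminus\omstarclose$ does; hence $\omstar\subset T(\Omega, t)\subset\omstarclose$, and excluding $\omstarbdry$ by the injectivity clause completes $T(\Omega, t)=\omstar$.

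The main care needed is verifying at $t=0$ that $T_0$ is genuinely a homeomorphism onto $\omstarclose$, so that its degree at an interior point really is $\pm 1$; this reduces to the set-theoretic equality $T_0(\omclose)=\omstarclose$, which follows from~\eqref{u_0 conditions} plus continuity and the compactness of $\omclose$. A purely topological alternative avoiding degree theory is available: the connected set $T(\Omega, t)$ is clopen in $\omstar\cup(\R^n\setminus\omstarclose)$ (open as above, closed because its boundary sits in $\omstarbdry$), and continuity in $t$ starting from the initial condition $T(\Omega, 0)=\omstar$ forces it to remain in the component $\omstar$ throughout $[0,\tmax)$.
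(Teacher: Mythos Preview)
Your degree-theoretic argument is correct in substance, but one step is misattributed. The claim ``combined with injectivity this gives $T(\Omega,t)\cap\omstarbdry=\emptyset$'' does \emph{not} follow from injectivity on $\omclose$ together with $T(\ombdry,t)\subset\omstarbdry$: if $T(\ombdry,t)$ were a proper subset of $\omstarbdry$, a point of $\Omega$ could in principle land on $\omstarbdry\setminus T(\ombdry,t)$ without violating injectivity. Fortunately your degree computation does not actually use this claim---well-definedness of $\deg(T(\cdot,t),\Omega,y_0)$ only needs $y_0\notin T(\ombdry,t)$, which holds for $y_0\in\omstar$ or $y_0\notin\omstarclose$ since $T(\ombdry,t)\subset\omstarbdry$. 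Once homotopy invariance gives $T(\Omega,t)\subset\omstarclose$ and $\omstar\subset T(\Omega,t)$, the disjointness from $\omstarbdry$ follows from the \emph{openness} of $T(\Omega,t)$ (an open subset of $\omstarclose$ cannot meet $\omstarbdry$). So the final ``injectivity clause'' should read ``openness.'' Your alternative clopen argument inherits the same issue, since it presupposes $T(\Omega,t)\subset\R^n\setminus\omstarbdry$; it can be repaired by working with $T(\Omega,t)\cap\omstar$ instead.

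The paper's proof takes a different route. It first reduces to $\Omega=\omstar=B_1(0)$ via the $c$- and $c^*$-convexity homeomorphisms, then uses invariance of domain on spheres to upgrade $T(\ombdry,t)\subset\omstarbdry$ to an equality $T(\ombdry,t)=\omstarbdry$. From there an intermediate-value argument in $t$ (applied to $\lvert T(x_0,t)\rvert$) shows $T(\Omega,t)\subset\omstar$, and a second invariance-of-domain step gives surjectivity. Your approach trades the reduction-to-balls and the two invocations of invariance of domain for a single homotopy-invariance statement about Brouwer degree; this is arguably cleaner and works directly on the original domains, whereas the paper's argument stays closer to elementary tools. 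Both are purely topological and rely on the same inputs (continuity, injectivity from Corollary~\ref{optimalonetoone}, nondegeneracy of $DT$, the boundary inclusion, and the initial surjectivity $T_0(\Omega)=\omstar$).
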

\begin{proof}
By $c$-convexity and the conditions~\eqref{twist} and~\eqref{nondeg}, $\Omega$ and $\omstar$ are homeomorphic to convex sets, and thus the unit ball in $\R^n$, and also their boundaries are connected. Since the argument here will be entirely topological, we may compose $T$ with the appropriate homeomorphisms, and assume $\Omega=\omstar=B_1(0)$. The boundary condition~\eqref{bdrycond} implies that $T(\ombdry, t)\subset \omstarbdry$ for each $t$. $T$ is continuous on $\ombdry$, and by Corollary~\ref{optimalonetoone}, is one-to-one there. By Remark~\ref{rmk: DT is nondegenerate}, $\Tinv$ is also continuous on $T(\ombdry, t)$ and thus $T(\cdot, t)$ is a homeomorphism on $\ombdry$. Hence, 
\begin{equation}\label{eq: image of boundary}
T(\ombdry, t)=\omstarbdry\text{ for each }t.
\end{equation}
Suppose there is some $t_0<\tmax$ such that $T(\Omega, t_0)\not\subset\omstar$. That means, for some $x_0\in\Omega$, $T(x_0, t_0)\not\in\omstar$. However, since $T(x_0, 0)\in\omstar$ by assumption, by using the intermediate value property on the continuous, real valued function $\phi(t)=\lvert T(x_0, t)\rvert$, there must be a $t\in(0, t_0)$ such that $T(x_0, t)\in\omstarbdry$, contradicting that $T(\cdot, t)$ maps $\ombdry$ one-to-one and onto $\omstarbdry$. Thus, $T(\Omega, t)\subset\omstar$ and hence $T(\omclose, t)\subset\omstarclose$ for any $t<\tmax$.\\
As above, $T(\cdot, t)$ is a homeomorphism on $\omclose$ as well, so we actually have $T(\omclose, t)=\omstarclose$, $t<\tmax$. But since $T(\ombdry, t)=\omstarbdry$, and since $T(\cdot, t)$ is one-to-one on $\omclose$, we see that
\begin{equation*}
T(\Omega, t)=\omstar, t<\tmax.
\end{equation*}
\end{proof}

\begin{rmk}
By the above Corollary, we can now see that the solution $u$ will be independent of the extension that we chose for $g$ outside of $\omstar$.
\end{rmk}

\begin{lemma}\label{reverseproblem}
Under the conditions of Corollary~\ref{optimalonetoone}, define the $c$-transform of $u(x, t)$ as $u^*(y, t) = c(x, y)-u(x, t)$, where $y=T(x, t)$. Also, define $T^*(y, t) = X(\gradustar(y, t), y)$. Then, 

\begin{enumerate}
\item $T^*(\cdot, t)=T^{-1}(\cdot, t)$.
\item $u^*$ satisfies the following flow equation
\begin{equation*}
\begin{cases}{\label{reveqn}}
\ustardot(y,t) = \log{\det{(\nabla^2u^*(y,t) - A^*(\gradustar(y,t), y))}}\\
\qquad\qquad-\log{B^*(\gradustar(y,t), y)},\qquad y\in\omstar&\\
\hbar(X(\gradustar(y,t), y))=0,\qquad\qquad\quad y\in\omstarbdry&\\
u^*\vert_{t=0}=(u_0)^*&
\end{cases}
\end{equation*}
for $0\leq t<\tmax$ where 
\begin{align*}
(u_0)^*(T_0(x))&=c(x, T_0(x))-u(x)\\
A^*(y, q) &= \hessy c(X(q, y), y)\\
B^*(y, q)&=\lvert\det \hessxy{c(x,y)}\rvert_{x=X(q, y)}\cdot\frac{g(y)}{f(X(q, y))}
\end{align*}
for $f$ extended to be in $C^2(\R^n)$ with bounds $\frac{\lambda}{2}\leq f\leq 2\Lambda$, and $\hbar$ a defining function for $\Omega$.
\end{enumerate}
\end{lemma}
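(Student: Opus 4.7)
The plan is to derive both claims from the defining identity
\[
u^*(T(x,t), t) = c(x, T(x,t)) - u(x,t),
\]
which, together with the fact that $T(\cdot, t)\colon \omclose \to \omstarclose$ is a bijection for each $t \in [0, \tmax)$ by Corollaries~\ref{optimalonetoone} and~\ref{cor: T is onto}, shows that $u^*(\cdot, t)$ is well-defined on $\omstarclose$.

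For part (1), I would differentiate the defining identity in $x_i$. Two of the three terms carry a factor of $D_x T$, and using the relation $\nabla u(x,t) = c_x(x, T(x,t))$ (which follows from $T(x,t) = Y(x, \nabla u(x,t))$ via \eqref{twist}) these terms cancel, leaving
\[
\nabla_y u^*(T(x,t), t) = c_y(x, T(x,t)).
\]
Applying \eqref{twist} a second time then gives $X(\nabla u^*(y,t), y) = T^{-1}(y, t)$ for $y \in \omstar$, i.e., $T^*(\cdot, t) = T^{-1}(\cdot, t)$.

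For part (2), differentiating the defining identity in $t$ with $x$ fixed yields, after using part~(1) to cancel the $\dot T$ contributions, $\ustardot(T(x,t), t) = -\udot(x,t)$. To handle the Hessian, I would differentiate $\nabla u^*(T(x,t), t) = c_y(x, T(x,t))$ once more in $x$; using the identity $A^*(\nabla u^*(T(x,t),t), T(x,t)) = c_{yy}(x, T(x,t))$ (a direct consequence of $X(\nabla u^*(T(x,t),t), T(x,t)) = x$), this produces
\[
\left.(\nabla^2 u^* - A^*)\right|_{y=T(x,t)} \cdot D_x T(x, t) = c_{y,x}(x, T(x,t)).
\]
Taking determinants and combining with Remark~\ref{rmk: DT is nondegenerate} (which provides $\det D_x T = f e^{\udot}/(g \circ T)$) and the definition of $B^*$ in terms of $|\det c_{x,y}|$, one obtains $\log \det(\nabla^2 u^* - A^*) - \log B^* = -\udot = \ustardot$ at $y = T(x,t)$, which is the required flow equation. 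The boundary condition $\hbar(T^*(y,t)) = \hbar(T^{-1}(y,t)) = 0$ on $\omstarbdry$ then follows from $T(\ombdry, t) = \omstarbdry$ established in~\eqref{eq: image of boundary}, and the initial condition is immediate from evaluating the defining identity at $t = 0$.

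The main technical point is tracking signs in the determinant step: by~\eqref{nondeg} and connectedness, $\det c_{x,y}$ has constant sign on $\omclose \times \omstarclose$, and positivity of $\det D_x T$ (forced by positivity of $w$ via the matrix identity above together with Remark~\ref{rmk: DT is nondegenerate}) pins this sign down so that $\det c_{x,y} = |\det c_{x,y}|$ and the $B$ and $B^*$ factors match cleanly. Everything else is routine chain rule combined with the $c$-transform reciprocity already built into~\eqref{twist}.
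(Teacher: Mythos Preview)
Your proposal is correct and takes essentially the same approach as the paper: the only cosmetic difference is that the paper differentiates the identity $u^*(y,t)=c(T^{-1}(y,t),y)-u(T^{-1}(y,t),t)$ in $y$ (carrying $D(T^{-1})$ through the cancellations), whereas you differentiate the equivalent identity $u^*(T(x,t),t)=c(x,T(x,t))-u(x,t)$ in $x$, which is the dual chain-rule computation leading to the same relations. Your explicit treatment of the boundary condition via~\eqref{eq: image of boundary} and of the sign of $\det c_{x,y}$ is, if anything, more careful than the paper's own proof.
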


\begin{proof}
Since we know that $T(\cdot, t)$ is invertible for each $t$, we may fix a $t$ and write $u^*(y, t) = c(\Tinv(y, t), y)-u(\Tinv(y, t), t)$ and $x=\Tinv(y, t)$. We will drop all $t$ for ease of notation here. Differentiating both sides in $y$ and remembering the definition of $T$ gives us
\begin{align*}
\gradustar(y) &=D(\Tinv)\cdot [\nabla_xc(\Tinv(y), y)-\gradu(\Tinv(y))]+\nabla_yc(\Tinv(y), y)\\
&=D(\Tinv)\cdot [\nabla_xc(x, T(x))-\gradu(x)]+\nabla_yc(\Tinv(y), y)\\
&=\nabla_yc(\Tinv(y), y).
\end{align*}
Thus by the uniqueness in assumption~\eqref{twist}, we have that $T^*(y, t) = \Tinv(y, t)$.\\
Next, we differentiate both sides of the relation$\gradustar(y)=\nabla_yc(\Tinv(y), y)$ to obtain
\begin{align*}
\hessy u^*&=\hessy c(\Tinv(y), y)+D(\Tinv)(y)\cdot\hessxy c(\Tinv(y), y)\\
&=\hessy c(T^*(y), y)+[D(T)]^{-1}\vert_{x=T^*(y)}\cdot\hessxy c(T^*(y), y).\\
\end{align*}
Rearranging and taking determinants of both sides, and using Remark~\ref{rmk: DT is nondegenerate}, we obtain
\begin{equation*}
\det{(\hessy u^*-\hessy c(T^*(y), y))}=e^{-\udot(T^*(y))}\frac{g(y)}{f(T^*(y))}\cdot\lvert\hessxy c(T^*(y), y)\rvert.
\end{equation*}
Now, differentiating $u^*(y, t) = c(\Tinv(y, t), y)-u(\Tinv(y, t), t)$ in $t$, we have
\begin{align*}
\ustardot(y, t)&=D(\Tinv(y, t))\cdot [\nabla_xc(\Tinv(y, t), y)-\gradu(\Tinv(y, t), t)]\\
&\qquad-\udot(\Tinv(y, t), t)\\
&=-\udot(T^*(y, t), t).
\end{align*}
Combining the above, we obtain the desired equation for $u^*$.
\end{proof}

\section{Estimate of $\nabla u$}
\begin{thm}\label{gradientbdd}
As long as a solution to the equation~\eqref{eqn} exists on a time interval $[0, \tmax)$, $\lvert \nabla u\rvert \leq C$ for some $C>0$ depending on $\omstar$ and $c$, but independent of $\tmax$.
\end{thm}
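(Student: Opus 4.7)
The plan is to reduce the gradient bound to a compactness statement about the image of the map $T$. By definition $T(x, t) = Y(x, \gradu(x, t))$, and by the twist condition~\eqref{twist} this is equivalent to the identity
\begin{equation*}
\gradu(x, t) = \nabla_x c(x, y)\big|_{y = T(x, t)}.
\end{equation*}
So bounding $\gradu$ at a fixed $(x, t)$ reduces to controlling the point $T(x, t) \in \R^n$ and invoking smoothness of $c$ on a compact set.

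The crucial input is Corollary~\ref{cor: T is onto}, which gives $T(\Omega, t) = \omstar$ for every $t \in [0, \tmax)$; combined with the boundary condition~\eqref{bdrycond}, this forces $T(x, t) \in \omstarclose$ for all $x \in \omclose$ and $t \in [0, \tmax)$. Since $\omstarclose$ is compact and $c \in C^{4+\alpha}(\omclose \times \omstarclose)$ by~\eqref{A0}, the quantity
\begin{equation*}
C := \sup_{(x, y) \in \omclose \times \omstarclose} |\nabla_x c(x, y)|
\end{equation*}
is finite and depends only on $c$ and $\omstar$ (and $\Omega$), but not on $\tmax$ nor on the particular solution $u$. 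Plugging $y = T(x, t) \in \omstarclose$ into the identity above yields $|\gradu(x, t)| \leq C$ pointwise.

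There is essentially no obstacle here; the entire content of the estimate has already been packaged into Corollary~\ref{cor: T is onto} (which in turn rests on Lemma~\ref{lemma: stays strictly c-convex} and the topological argument trapping $T(\Omega, t)$ inside $\omstar$). The only thing to verify cleanly is that the constant $C$ is indeed independent of $\tmax$, which is immediate because it is built from the fixed data $c$, $\Omega$, and $\omstar$ alone.
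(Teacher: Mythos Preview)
Your proof is correct and follows essentially the same approach as the paper: invoke Corollary~\ref{cor: T is onto} to trap $T(x,t)$ in $\omstarclose$, then use the identity $\gradu(x,t)=\nabla_x c(x,T(x,t))$ together with the boundedness of $\lvert\nabla_x c\rvert$ on the compact set $\omclose\times\omstarclose$. The paper's proof is just a two-sentence version of what you wrote.
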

\begin{proof}
By Corollary~\ref{cor: T is onto} and the boundedness of $\omstar$, $\lvert T\rvert$ is bounded independent of $t$. From the relationship $\nabla_xc(x,y)\rvert_{y=T(x)}=\gradu(x, t)$ and the boundedness of $\lvert\nabla_xc\rvert$ the estimate is immediate.
\end{proof}

\section{Obliqueness of the Boundary Condition}
Let $\nu$ and $\nustar$ be the outward unit normal vectors to $\Omega$ and $\omstar$ respectively. Writing $\Gbar(x, p) = \hstarbar(Y(x, p))$, the boundary condition~\eqref{bdrycond} can be written
\begin{equation*}
\Gbar(x, \gradu) =0\text{ for }x\in\ombdry.
\end{equation*}

\begin{thm}\label{obliquethm}
As long as the solution to~\eqref{eqn} exists, we have obliqueness of the boundary condition, i.e.
\begin{equation}
\inner{\Gbar_p(x, \gradu)}{\nu(x)} >0, x\in \ombdry, t>0.
\label{obliqueness}
\end{equation}
\end{thm}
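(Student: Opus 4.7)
The plan is to reduce the quantity $\inner{\bar{G}_p(x,\nabla u)}{\nu(x)}$ to something we can sign using positive definiteness of $w_{ij}$ (Lemma~\ref{lemma: stays strictly c-convex}) together with the fact that $T(\cdot,t)$ is a homeomorphism from $\omclose$ onto $\omstarclose$ sending $\ombdry$ onto $\omstarbdry$ (Corollaries~\ref{optimalonetoone} and~\ref{cor: T is onto}) with nondegenerate differential (Remark~\ref{rmk: DT is nondegenerate}).

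First I would compute $\bar{G}_p$ explicitly. Differentiating~\eqref{twist} in $p$ gives $Y^l_{p_k}(x,p)=c^{l,k}(x,Y(x,p))$, hence
\begin{equation*}
\bar{G}_{p_k}(x,\gradu)=\hstarbar_l(T(x,t))\,c^{l,k}(x,T(x,t)).
\end{equation*}
Next, since the boundary condition $\hstarbar(T(x,t))=0$ holds identically on $\ombdry$, differentiating along any tangent vector $\tau$ to $\ombdry$ and recalling from Remark~\ref{rmk: DT is nondegenerate} that $D_iT^j=c^{j,k}w_{ki}$, yields
\begin{equation*}
0=\hstarbar_l(T)\,c^{l,k}(x,T)\,w_{ki}(x,t)\,\tau^i.
\end{equation*}
Thus the covector $\hstarbar_l c^{l,k}w_{ki}$ annihilates every tangent direction, so it must be a scalar multiple of $\nu$; write
\begin{equation*}
\hstarbar_l(T)\,c^{l,k}(x,T)\,w_{ki}(x,t)=\mu\,\nu_i(x)
\end{equation*}
for some scalar $\mu=\mu(x,t)$. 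Since $\hstarbar_l=\nustar_l$ on $\omstarbdry$, we recognize $\mu$ as $\nustar_l(T)\,c^{l,k}w_{ki}\nu^i=\nustar_l\,D_iT^l\,\nu^i$, i.e.\ $\mu=\langle DT(\nu),\nustar(T)\rangle$.

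The main step is then to show $\mu>0$. This is a geometric/orientation argument: by Corollary~\ref{cor: T is onto} and Corollary~\ref{optimalonetoone}, $T(\cdot,t)$ is a continuous bijection $\omclose\to\omstarclose$ taking $\Omega$ onto $\omstar$ and $\ombdry$ onto $\omstarbdry$; by Remark~\ref{rmk: DT is nondegenerate}, $DT(x,t)$ is invertible at every boundary point. The restriction $DT\vert_{T_x\ombdry}$ maps tangentially into $T_{T(x)}\omstarbdry$, and the two tangent spaces have equal dimension $n-1$, so invertibility of $DT$ forces $DT(\nu)$ to have a nonzero $\nustar$-component. Moreover, for small $s>0$ the curve $s\mapsto T(x-s\nu,t)$ starts at $T(x)\in\omstarbdry$ and immediately enters $\omstar$ (since $T$ sends interior to interior); thus its initial velocity $-DT(\nu)$ has negative $\nustar$-component, i.e.\ $\mu=\langle DT(\nu),\nustar\rangle>0$. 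Finally, inverting the relation gives $\bar{G}_{p_k}=\mu\,w^{ik}\nu_i$, where $w^{ik}$ is the (positive definite) inverse of $w_{ki}$, and therefore
\begin{equation*}
\inner{\bar{G}_p(x,\gradu)}{\nu(x)}=\mu\,w^{ik}(x,t)\,\nu_i(x)\nu_k(x)>0,
\end{equation*}
as desired. I expect the orientation/sign step for $\mu$ to be the only real content; all the algebraic identifications are bookkeeping once $Y_p=c^{\cdot,\cdot}$ and $D_iT^j=c^{j,k}w_{ki}$ are in hand.
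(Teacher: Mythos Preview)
Your proof is correct and follows essentially the same route as the paper: compute $\beta_k=\bar G_{p_k}=\hstarbar_l\,c^{l,k}$, differentiate the boundary condition tangentially to obtain $\hstarbar_l c^{l,k}w_{ki}=\chi\,\nu_i$, and conclude $\langle\beta,\nu\rangle=\chi\,w^{kl}\nu_k\nu_l>0$ from positivity of $w^{ij}$ and of $\chi$. The only difference is cosmetic: you justify $\chi>0$ (your $\mu$) by an explicit orientation argument invoking Corollaries~\ref{optimalonetoone} and~\ref{cor: T is onto}, whereas the paper gets $\chi\geq 0$ in one line from $\hstarbar<0$ on $\omstar$ and then $\chi\neq 0$ from nondegeneracy of $DT$ (Remark~\ref{rmk: DT is nondegenerate}); these are the same argument at different levels of detail.
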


\begin{proof}
First we note that $\hstarbar(T(x, t))=0$ for $x \in \ombdry$, so for any $\tau$ which is tangential to $\ombdry$, we find that $\hstarbar_l D_iT^l\tau_i=0$. Also, since $\hstarbar <0$ in $\Omega$, we find from this 
\begin{equation}
\hstarbar_l D_iT^l=\hstarbar_l c^{l, k}w_{ki}=\chi \nu_i 
\label{urbas0}
\end{equation}
or equivalently, $\hstarbar_l c^{l, i}=\chi w^{ik}\nu_k$for some $\chi \geq 0$. Writing $\beta(x, t) = \nabla_p \Gbar(x, p)\vert_{p=\gradu}=\Gbar_p(x, \gradu)$, we see that $\beta_k = \hstarbar_lY^l_{p_k} \vert_{p=\gradu} = \hstarbar_lc^{l, k}\vert_{p= \gradu}$, hence 

\begin{equation}
\inner{\beta}{\nu}=\chi w^{kl}\nu_k\nu_l.
\label{urbas1}
\end{equation}

By Remark~\ref{rmk: DT is nondegenerate}, $\det{DT} \neq 0$, and hence $\chi >0$. By Lemma~\ref{lemma: stays strictly c-convex}, $w^{ij}$ will remain positive definite as long as the solution exists, hence the desired obliqueness.
\end{proof}

\section{Estimate of $\udot$}
\begin{thm}\label{tderivthm}
We have the estimate
\begin{equation*}
\min_{t=0}\udot \leq \udot(x, t) \leq \max_{t=0}\udot,\  \forall x\in \Omega,\ 0\leq t<\tmax.
\end{equation*}
\end{thm}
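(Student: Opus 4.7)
The plan is to recognize this as a standard parabolic maximum principle applied to $v := \udot$, once we show that $v$ satisfies a linear parabolic equation with a homogeneous oblique boundary condition.

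First I would differentiate the interior equation~\eqref{floweqn} in $t$. Writing $F(x,p,r) = \log\det(r_{ij}-A_{ij}(x,p)) - \log B(x,p)$ and recalling that $w^{ij}$ denotes the inverse of $w_{ij}=u_{ij}-A_{ij}(x,\nabla u)$, this yields
\begin{equation*}
\vdot = w^{ij} v_{ij} - \bigl(w^{ij}D_{p_k}A_{ij}(x,\nabla u) + D_{p_k}\Btil(x,\nabla u)\bigr)v_k,
\end{equation*}
which is linear parabolic in $v$ with no zeroth-order term. By Lemma~\ref{lemma: stays strictly c-convex}, $w^{ij}$ is positive definite on $[0,\tmax)$, so the operator on the right is uniformly elliptic on any compact subinterval of $[0,\tmax)$.

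Next I would differentiate the boundary condition~\eqref{bdrycond} in $t$. Since $\Gbar(x,\nabla u(x,t))=0$ on $\ombdry$ for all $t$, we get
\begin{equation*}
\inner{\beta(x,t)}{\nabla v(x,t)} = \Gbar_{p_k}(x,\nabla u)\, v_k = 0 \quad \text{on } \ombdry,
\end{equation*}
where $\beta = \Gbar_p$ is the oblique vector from Theorem~\ref{obliquethm}. By that theorem, $\inner{\beta}{\nu}>0$ along $\ombdry\times[0,\tmax)$, so this is a genuine oblique (Neumann-type) homogeneous boundary condition for $v$.

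Now I would apply the parabolic maximum principle. Fix any $\tbar<\tmax$ and suppose, for the upper bound, that $v$ attains its maximum on $\omclose\times[0,\tbar]$ at some $(x_0,t_0)$ with $t_0>0$. If $x_0\in\Omega$, the usual interior argument (applied after a standard perturbation such as $v_\epsilon := v - \epsilon t$ to get strict inequalities) yields a contradiction: at an interior maximum $v_{ij}\leq 0$ and $v_k=0$, forcing $\vdot\leq 0$, which is incompatible with $\vdot_\epsilon = -\epsilon < 0$ being consistent with a genuine maximum. If instead $x_0\in\ombdry$, then by Hopf's lemma for oblique parabolic problems applied to $v_\epsilon$ (again using that $\inner{\beta}{\nu}>0$), we obtain $\inner{\beta}{\nabla v_\epsilon}(x_0,t_0)>0$, contradicting the boundary condition $\inner{\beta}{\nabla v}=0$ (and the error is absorbed by taking $\epsilon\to 0^+$). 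Hence $v(x,t)\leq \max_{t=0}\udot$ on $\omclose\times[0,\tbar]$, and the lower bound follows by applying the same reasoning to $-v$.

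The only step that requires genuine care is the boundary case: one must invoke a version of Hopf's lemma valid for the linear oblique parabolic operator above. The obliqueness needed is exactly $\inner{\beta}{\nu}>0$, which is guaranteed by Theorem~\ref{obliquethm}, so this goes through. Since $\tbar<\tmax$ was arbitrary, we obtain the stated bound on all of $\omclose\times[0,\tmax)$.
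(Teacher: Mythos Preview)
Your proof is correct and follows the same overall strategy as the paper: differentiate~\eqref{floweqn} and~\eqref{bdrycond} in $t$ to see that $v=\udot$ solves a linear parabolic equation with the homogeneous oblique condition $v_\beta=0$, then apply the maximum principle with a small perturbation and let the perturbation parameter tend to zero.

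The only difference is in how the boundary case is handled. You perturb by $-\epsilon t$ alone, so the boundary condition for $v_\epsilon$ remains $\inner{\beta}{\nabla v_\epsilon}=0$, and you must invoke the parabolic Hopf boundary point lemma to obtain the strict inequality $\inner{\beta}{\nabla v_\epsilon}>0$ at a lateral boundary maximum. The paper instead perturbs by $-\epsilon\hbar(x)-C_1 t$, where $\hbar$ is a defining function for $\Omega$; this forces $\inner{\beta}{\nabla\vtil}=-\epsilon\inner{\beta}{\nu}<0$ identically on $\ombdry$, which already contradicts the elementary first-order condition $\nabla\vtil=\chi\nu$, $\chi\geq 0$, at a boundary maximum. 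The paper's device is thus slightly more self-contained (no Hopf lemma needed), at the cost of an extra spatial term whose contribution to $L\vtil$ must be absorbed by choosing $C_1$ large depending on bounds on the coefficients over $[0,\tbar]$. Either route is perfectly valid here.
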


\begin{proof}
Fix some $\ttil <\tmax$. By differentiating~\eqref{floweqn} in $t$ and writing $v(x, t) = \udot(x, t)$, we find that
\begin{equation*}
Lv= w^{ij}(v_{ij}-\Dp A_{ij}v_k)-\Dp\Btil v_k-\vdot=0
\end{equation*}
while differentiating the boundary condition~\eqref{bdrycond} in $t$, we see that
\begin{equation*}
v_\beta=0\text{ on }\ombdry.
\end{equation*}
Let $\hbar$ be a normalized defining function for $\Omega$, ie. $\nabla \hbar=\nu$ and $\hbar=0$ on $\ombdry$, and $\hbar< 0$ on $\Omega$. Define $\vtil(x, t) = v(x, t)-\epsilon \hbar(x)-C_1t$ for some fixed $\epsilon >0$ and a constant $C_1>0$ to be determined. By Lemma~\ref{lemma: stays strictly c-convex}, $w^{ij}$ remains positive definite as long as the solution exists, so for any fixed $\epsilon >0$, on $\Omega \times [0, \ttil]$ we have
\begin{equation*}
L\vtil=Lv-\epsilon[w^{ij}(h_{ij}-\Dp A_{ij}\hbar_k)-\Dp\Btil \hbar_k]+C_1>0
\end{equation*}
for the choice of $C_1=C\epsilon(1+M)$ with a constant $C$ depending only on bounds on $\lvert \nabla h\rvert$, $\lvert \hess{\hbar}\rvert$, $\lvert \nabla_p A_{ij}\rvert$, and $\lvert \nabla_p\Btil\rvert$, and with 
\begin{equation*}
M=\sup_{(x, t)\in\Omega\times [0, \ttil]}{\tr{(w^{ij}(x, t))}}.
\end{equation*}
Thus, the maximum principle implies that a maximum for $\vtil$ must occur on the parabolic boundary of $\Omega \times [0, \ttil]$\\
Now if $x\in\ombdry$, we have $(v-\epsilon \hbar-C_1t)_\beta=-\epsilon \inner{\nabla \hbar}{\beta}=-\epsilon \inner{\beta}{\nu}<0$, by~\eqref{obliqueness}. However, if $\vtil$ has a maximum at some $x\in\ombdry$, we have that $\nabla(v-\epsilon \hbar-C_1t)=\chi\nu$ for some $\chi \geq 0$. This implies that $(v-\epsilon \hbar-C_1t)_\beta=\chi\inner{\beta}{\nu}\geq 0$, a contradiction. Thus, the maximum for $\vtil$ must occur when $t=0$. Recalling that $\hbar< 0$ on $\Omega$ we have

\begin{align*}
v(x, t)&\leq (v(x, t)-\epsilon \hbar(x)-C_1t)+C_1t=\vtil(x, t)+C_1t\\
&\leq \sup_{x\in\Omega}\vtil(x, 0)+C_1t\leq\sup_{x\in\Omega}{v(x, 0)}+\epsilon\sup_{x\in\Omega}{\lvert \hbar\rvert}+C\epsilon(1+M)\tmax.
\end{align*}

By letting $\epsilon \to 0$ we obtain the bound
\begin{equation*}
v(x, t) \leq \sup_{x\in\Omega}v(x, 0)
\end{equation*}
for $t\leq \ttil$, and since $\ttil < \tmax$ was arbitrary, we obtain
\begin{equation*}
\sup_{(x, t)\in\Omega\times[0, \tmax)}{v(x, t)} \leq \sup_{x\in\Omega}{v(x, 0)}.
\end{equation*}
The lower bound is similar.
\end{proof}

\section{Uniform Obliqueness of the Boundary Condition}
By Theorem~\ref{thm: extended defining functions}, we can choose $\hstar$ in a way such that $\hstar<0$ on $\omstarbdryepstar=\left\{y\in\omstar \vert \dist(y, \omstarbdry)<\epstar\right\}$, $\hstar= 0$ and $\nabla \hstar = \nustar$ on $\omstarbdry$, while we have
\begin{equation}
[D_{ij}\hstar(y)-c^{k, l}c_{l, ij}(x, y)D_k\hstar(y)]\xi_i\xi_j \geq \delstar_0\lvert \xi \rvert^2
\label{globalcstarconvex}
\end{equation}
for any $x\in\Omega$, $y\in\omstarbdryepstar$, and $\xi \in \R^n$. Additionally, we take $G$ as defined in Corollary~\ref{cor: construction of G}\\

\begin{rmk}\label{rmk: we can replace hstar}
Note that the boundary condition~\eqref{bdrycond}
\begin{equation*}
\Gbar(x, \gradu) =0\text{ on }\ombdry
\end{equation*}
is equivalent to 
\begin{equation*}
G(x, \gradu)=0\text { on }\ombdry.
\label{altbdrycond}
\end{equation*}

Also, since $\nabla \hstar = \nabla \hstarbar = \nustar$ on $\omstarbdry$, we have that $G_p(x, \gradu)=\Gbar_p(x, \gradu)=\beta(x,t)$ on $\omstarbdry$.
\end{rmk}
\begin{thm}\label{unifobliquethm}
We have uniform strict obliqueness of the boundary condition~\eqref{bdrycond} for $t\in[0, \tmax)$, i.e.
\begin{equation*}
\inner{\Gbar_p(x, \gradu(x, t))}{\nu(x)} \geq C >0,\ x\in \ombdry,\ 0\leq t<\tmax
\label{unifobliqueness}
\end{equation*}
where $C$ depends on $\Omega$, $\omstar$, $B$, $c$, and $u_0$ but is independent of $t$ and $\tmax$. 
\end{thm}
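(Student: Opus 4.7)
The plan is a maximum-principle argument for an auxiliary function on $\ombdry \times [0, \tmax)$. By Remark~\ref{rmk: we can replace hstar} I may replace $\Gbar$ by the boundary function $G$ from Corollary~\ref{cor: construction of G}, which is uniformly convex in $p$ and satisfies $G_p = \Gbar_p$ on $\omstarbdry$; a positive lower bound on $\inner{G_p(x,\gradu)}{\nu(x)}$ for $x\in\ombdry$ therefore suffices.

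Consider
\[
\omega(x,t) := \log\inner{G_p(x,\gradu(x,t))}{\nu(x)},
\]
defined on $\ombdry\times[0,\tmax)$ by Theorem~\ref{obliquethm}. For any $T<\tmax$, either the infimum of $\omega$ over $\ombdry\times[0,T]$ is attained at $t=0$ (in which case compactness of $\ombdry$ and the hypotheses on $u_0$ give a positive lower bound depending only on $u_0$, $\Omega$, $\omstar$, and $c$), or it is attained at some $(x_0,t_0)$ with $t_0\in(0,T]$. In the latter case the parabolic minimum conditions yield $D_\tau\omega = 0$ for every unit tangent vector $\tau$ to $\ombdry$ at $x_0$, the inequality $\sum_\tau D_{\tau\tau}\omega \geq 0$ for any orthonormal tangent frame, and $\partial_t\omega \leq 0$.

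Differentiating the boundary condition $G(x,\gradu(x,t))=0$ once tangentially eliminates the mixed third derivatives of $u$ that appear when computing the tangential Hessian of $\omega$ at $x_0$. After this reduction, $\sum_\tau D_{\tau\tau}\omega$ at $(x_0,t_0)$ splits into three pieces: a quadratic form in $G_{pp}$, uniformly positive by Corollary~\ref{cor: construction of G}; a term proportional to $[D_i\nu_j-c^{l,k}c_{ij,l}\nu_k]\tau_i\tau_j$, bounded below by $\delbar$ via~\eqref{unifcconvex}; and a term proportional to $[D_{ij}\hstar-c^{k,l}c_{l,ij}D_k\hstar]\xi_i\xi_j$, bounded below by $\delstar_0$ via~\eqref{globalcstarconvex}. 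All three contributions acquire a factor of $1/\inner{G_p}{\nu}$ from the logarithm, while the negative remainders are controlled by the a priori bounds on $\gradu$ from Theorem~\ref{gradientbdd} and on $\udot$ from Theorem~\ref{tderivthm}, together with the $C^1$-bounds on $\hstar$, $h$, and $c$. Hence if $\inner{G_p}{\nu}$ at $(x_0,t_0)$ were smaller than a threshold depending only on these structural constants, one would obtain $\sum_\tau D_{\tau\tau}\omega < 0$, contradicting the minimum condition. The time derivative $\partial_t\omega$, computed from $\partial_t G(x,\gradu)=\inner{G_p}{\gradu_t}=0$ on $\ombdry$, only enters through $G_{pp}$-coupled terms already controlled and is non-positive at the minimum, so it does not spoil the argument.

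The main obstacle is the bookkeeping in the tangential-Hessian computation: one must identify the correct auxiliary function $\omega$ (the logarithm is essential to get the inverse power of $\inner{G_p}{\nu}$ in front of the convexity terms) and carefully organize the cancellations so that the three uniform convexity ingredients — of $G$ in $p$, of $\ombdry$ via $h$, and of $\omstarbdry$ via $\hstar$ — combine with the right signs. The argument parallels the elliptic calculation in~\cite{MR2512204}, with added care required to track the parabolic time contribution and the bounds from Theorems~\ref{gradientbdd} and~\ref{tderivthm}.
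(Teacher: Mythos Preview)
Your sketch has a genuine structural gap: a purely tangential second-variation argument on $\ombdry$ cannot close, and the paper's proof is built quite differently.

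First, the claimed cancellation of third derivatives does not occur. Differentiating $G(x,\gradu)=0$ twice tangentially gives you $G_{p_k}u_{k\tau\tau}$ in terms of lower order data, i.e.\ one specific linear combination of the $u_{k\tau\tau}$. But the third-order term appearing in $D_{\tau\tau}\omega$ is $G_{p_kp_l}\nu_k\,u_{l\tau\tau}$, which is a different combination; the boundary relation does not eliminate it. Even setting this aside, the remaining terms in $\sum_\tau D_{\tau\tau}\omega$ contain quadratic expressions in $\hess u$ (e.g.\ from $G_{p_kp_l}u_{k\tau}u_{l\tau}$, $G_{x p}$ couplings, second-fundamental-form pieces), and at this point in the argument no $C^2$ bound on $u$ is available --- the uniform obliqueness estimate precedes the second-derivative estimates and is in fact used in their proof. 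So your ``negative remainders controlled by Theorems~\ref{gradientbdd} and~\ref{tderivthm}'' are not controlled. Finally, the contradiction you describe has the wrong sign: if the three convexity contributions are positive with a prefactor $1/\inner{G_p}{\nu}$, then making $\inner{G_p}{\nu}$ small makes $\sum_\tau D_{\tau\tau}\omega$ \emph{more} positive, which is consistent with, not contradictory to, the minimum condition.

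What the paper actually does is work on a full neighborhood $\omep=\Omega\cap B_\epsilon(x_0)$, not just on $\ombdry$. One applies the linearized parabolic operator $L$ to $v=\inner{G_p}{\nu}-\kappa G$ and, using the uniform $p$-convexity of $G$ to absorb the $\tr(w_{ij})$ term, obtains $Lv\leq C\tr(w^{ij})$; then one builds a barrier $\Theta$ with the $c$-convex defining function $h$ (for which $Lh\geq\delta_0\tr(w^{ij})$) to bound $v_\nu$ at the minimum point by a constant independent of $\hess u$. That normal-derivative bound, combined with the key algebraic identity $\inner{\beta}{\nu}^2=(w^{kl}\nu_k\nu_l)(w_{mn}c^{i,m}\hstar_i c^{j,n}\hstar_j)$ and the explicit computation of $\hstar_qc^{q,i}D_i\inner{\beta}{\nu}$ via the $c$- and $c^*$-convexity inequalities, bounds the factor $w_{mn}c^{i,m}\hstar_i c^{j,n}\hstar_j$ from below. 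The remaining factor $w^{ij}\nu_i\nu_j$ is handled by repeating the whole argument for the dual potential $u^*$ of Lemma~\ref{reverseproblem} and using $w^{ij}=c^{k,i}c^{l,j}w^*_{kl}$. None of these ingredients --- the interior barrier, the factorization identity, or the dual problem --- appear in your outline, and at least the first is indispensable.
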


\begin{proof}
We take $\ttil < \tmax$, and then find a $C$ as above that is independent of $\ttil$.
Continuing the calculations from Theorem~\ref{obliquethm}, we find
\begin{equation}
w_{kl}c^{i, k}\hstarbar_ic^{j, l}\hstarbar_j = \chi \nu_l(c^{j, l}\hstarbar_j)=\chi\nu_l\beta_l=\chi\inner{\beta}{\nu}.
\label{urbas2}
\end{equation}

Since $\chi \neq 0$, we can now combine~\eqref{urbas1} and~\eqref{urbas2}, and use Remark~\eqref{rmk: we can replace hstar} to write
\begin{align}
\inner{\beta}{\nu}^2 &= (w^{kl}\nu_k\nu_l)(w_{mn}c^{i, m}\hstarbar_ic^{j, n}\hstarbar_j)\notag\\
&=(w^{kl}\nu_k\nu_l)(w_{mn}c^{i, m}\hstar_ic^{j, n}\hstar_j)
\label{urbas3}
\end{align}
for $x\in\ombdry$.
We will proceed by bounding the two terms on the right from below.

Define the linearized operator by,
\begin{equation}\label{linearized}
L\phi(x, t) = -\phidot(x, t) + w^{ij}(x, t)(\phi_{ij}(x, t)-\Dp A_{ij}(x, T(x, t))\phi_k(x, t)).
\end{equation}
Let $(x_0, t_0)$ be a point where $\inner{\beta(x, t)}{\nu(x)}$ achieves its minimum on $\ombdry \times [0, \ttil)$. We also define $F(x, p) = \inner{G_p(x, p)}{\nu(x)} - \kappa G(x, p)$ and $v(x, t) = F(x, \gradu(x, t))$, where $\nu(x)$ is an extension of the outward normal to a neighborhood of $\ombdry$ using the function $h$. Since $G(x, \gradu(x, t))=\hstar(T(x, t))=0$ for $x\in \ombdry$, $v(x, t)$ restricted to $\ombdry \times [0, \ttil]$ achieves its minimum at the same point $(x_0, t_0)$. Let $\omep = \Omega \cap B_\epsilon(x_0)$ with $\epsilon$ chosen small enough so that $h < 0$ on $\omep$.
Then we calculate
\begin{align}
Lv &= -\Fp \udot_k + w^{ij}(\Fxx[i]{j}+2\Fxp{i}u_{kj}+\Fp u_{kij}+\Fpp{l}u_{ki}u_{lj}\notag\\
&\qquad-\Dp[m]A_{ij}(\Fx[m]-\Fp u_{km}))\notag\\
&=\Fp Lu_k + w^{ij}(\Fxx[i]{j}+2\Fxp{i}u_{kj}+\Fpp{l}u_{ki}u_{lj}-\Dp[m]A_{ij}\Fx[m])\notag\\
&=\Fp (w^{ij}\Dx A_{ij}+\Dx\Btil+\Dp[l]\Btil u_{lk})\notag\\
&\qquad+w^{ij}(\Fxx[i]{j}+2\Fxp{i}u_{kj}+\Fpp{l}u_{ki}u_{lj}-\Dp[m]A_{ij}\Fx[m])\notag\\
&=\Fp (w^{ij}\Dx A_{ij}+\Dx\Btil+\Dp[l]\Btil (w_{lk}+A_{lk}))\notag\\
&\qquad+w^{ij}(\Fxx[i]{j}+2\Fxp{i}(w_{kj}+A_{kl})+\Fpp{l}(w_{ki}w_{lj}+2w_{ki}A_{lj}+A_{ki}A_{lj})\notag\\
&\qquad-\Dp[m]A_{ij}\Fx[m])\notag\\
&\leq C(1+\tr{(w^{ij})})+2\Fxp{i}w^{ij}w_{kj}+w^{ij}\Fpp{l}w_{ki}w_{lj}+2\Fpp{l}w^{ij}w_{ki}A_{lj}\notag\\
&\qquad+\Fp(\Dp[l]\Btil) w_{lk}\notag\\
&\leq C(1+\tr{(w^{ij})})+\Fpp{l}w_{kl}+\Fp(\Dp[l]\Btil) w_{lk}\label{Lvcalc}
\end{align}
where $C$ is a constant depending on $\Omega$, $\omstar$, $\kappa$, $u_0$, the cost $c$, and $B$. The expression for $Lu_k$ comes from differentiation of the equation~\eqref{floweqn}.

By Corollary~\ref{cor: construction of G}, we have that $\Gpp{l}\xi_k\xi_l\geq \deltil \lvert\xi\rvert^2$, so we have 
\begin{align}
\Fpp{l}w_{kl} &\leq \Dpp{l}\inner{\beta}{\nu}w_{kl}-\kappa \Gpp{l}w_{kl}\notag\\
&=(C_0-\deltil\kappa)\tr{(w_{ij})}.
\label{Fppest}
\end{align} 
Thus by choosing $\kappa$ large enough, we will have by~\eqref{Lvcalc},
\begin{equation}
Lv \leq C(1+\tr{(w^{ij})})\leq C\tr{(w^{ij})}.
\label{Lvest}
\end{equation}
The second inequality comes from the arithmetic geometric mean inequality, combined with the fact that
\begin{equation*}
\det{w^{ij}}=(\det{w_{ij}})^{-1}=(B+e^{\udot})^{-1} \geq \frac{1}{C}
\end{equation*}
from bounds on $f$, $g$, $\hessxy{c}$ and Theorem~\ref{tderivthm}.\\
By Theorem~\ref{thm: extended defining functions} we find that
\begin{equation}
Lh = w^{ij}(D_{ij}h-c^{l, k}c_{ij, l}(x, T)D_kh)\geq \delta_0\tr{(w^{ij})}
\label{Lhest}
\end{equation}
We fix a point $y_0 \in \omstar$ and consider the function
\begin{align}\label{definition of theta}
\Theta(x,t) &= (v(x, t)-v(x_0, t_0))+\inner {\alpha}{\nabla_yc(x, y_0) - p_0}\notag\\
&\qquad+C_1\lvert x-x_0\rvert^2 -C_2h
\end{align}
where $p_0 = \nabla_yc(x_0, y_0)$, $C_1$ and $C_2$ are positive constants, and $\alpha \in \R^n$ are to be determined. The idea is that we will adjust $\alpha$ to obtain $\Theta \geq 0$ on $\omep \times \{0\}$, $C_1$ to obtain $\Theta \geq 0$ on $\partial(\omep) \times (0, \tmax]$, and $C_2$ to bound the term involving $\tr{(w^{ij})}$ in $Lv$ so that $L\Theta \leq 0$ everywhere. Calling the map $\Psi(x) = \nabla_yc(x, y_0)$, which is a diffeomorphism by conditions~\eqref{twist} and~\eqref{nondeg}, we have by the uniform $c$-convexity of $\Omega$ that $\Omtil = \Psi(\Omega)$ is a strictly convex set in $\R^n$. Hence, there is a supporting hyperplane to $\Omtil$ at $p_0=\Psi(x_0)$, and we may assume that the normal direction to the supporting hyperplane away from $\Omtil$ is given by $e_n$. Write $\vtil(p, t) = v(\Psi^{-1}(p), t)$, $P(p)$ for the projection of $p$ onto $\Omtilbdry$ in the $e_n$ direction, and $P_0(p)$ for the orthogonal projection of $p$ onto the supporting hyperplane to $\Omtil$. Then, since $\vtil(p, 0) - \vtil(p_0, t_0)\geq 0$ for $p\in\Omtilbdry$ we have that
\begin{align*}
\vtil(p, 0) - \vtil(p_0, t_0) &\geq \vtil(P(p), 0)-\vtil(p_0, t_0)-\sup\lvert\nabla\vtil(\cdot, 0)\rvert\lvert p-P(p)\rvert\\
&\geq -\sup\lvert\nabla\vtil(\cdot, 0)\rvert\lvert p-P_0(p)\rvert\\
&\geq -\inner{\alpha}{p-p_0}
\end{align*}
if $\alpha$ is a sufficiently large multiple of $-e_n$. Additionally, with this choice, $\inner{\alpha}{p-p_0}\geq 0$ on the half space defined by the supporting hyperplane to $\Omtil$ at $p_0$, hence also on $\Omtil$, and thus $\inner{\alpha}{\Psi(x)-p_0}\geq 0$ on $\omep$. By positivity of the terms $-h$ and $\lvert x-x_0\rvert^2$, we find that $\Theta \geq 0$ on $\omep \times \{0\}$ as desired, and $\alpha$ depends only on $\vtil$ at $t=0$. Next, on $\partial(\omep) \times (0, \tmax]$ we have that 
\begin{align*}
\Theta &\geq C_1\lvert x-x_0\rvert^2+(v(x, t)-v(x_0, t_0))\\
&\geq 0
\end{align*}
for $C_1$ large depending on $\epsilon$ and upper bounds on $\vert v\rvert$, which in turn depend on $\hstar$ and $c$.

Finally, since 
\begin{equation*}
L(\inner {\alpha}{\nabla_yc(x, y_0) - p_0}) =w^{ij}c_{ij, k}\alpha_k-w^{ij}(\Dp A_{ij})c_{i,k}\alpha_k\leq C\tr{(w^{ij})}
\end{equation*}
and
\begin{equation*}
L(\lvert x-x_0\rvert^2)=\tr{(w^{ij})}-w^{ij}(\Dp A_{ij})(x-x_0)_k\leq C\tr{(w^{ij})}
\end{equation*}
we calculate from~\eqref{Lvest} and~\eqref{Lhest} that 
\begin{equation*}
L\Theta \leq C\tr{(w^{ij})} -\delta_0C_2 \tr{(w^{ij})} \leq 0
\end{equation*}
for $C_2$ sufficiently large.
Thus noting that $\Theta(x_0, t_0) = 0$, by the comparison principle we find that $\inner{\nabla \Theta}{\nu} \leq 0$ or
\begin{align}
\inner{\nabla v}{\nu}&\leq C\lvert\inner{\alpha}{\nu}\rvert+C_2\lvert\inner{\nabla h}{\nu}\rvert\leq C.
\label{Dvest}
\end{align}
Now we calculate at $(x_0, t_0)$, using the formulae from Remark~\eqref{rmk: formulae for inverse of c_x,y}, 
\begin{align*}
D_i\inner{\beta}{\nu} &= D_i(\hstar_lc^{l, k}\nu_k)\notag\\
&= \hstar_{lm}(D_iT^m)c^{l, k}\nu_k+\hstar_l(c^{l, k}_i+c^{l, k}_{\ ,m}D_iT^m)\nu_k+\hstar_lc^{l, k}D_i\nu_k\notag\\
&= \hstar_l(c^{l, k}D_i\nu_k-c^{l,s}c^{r,k}c_{is, r}\nu_k)\notag\\
&\qquad+(D_iT^m)(\hstar_{lm}c^{l, k}\nu_k-\hstar_lc^{l,s}c^{r,k}c_{s, mr}\nu_k)\notag\\
&= \hstar_lc^{l, j}(D_i\nu_j-c^{r,k}c_{ij, r}\nu_k)\notag\\
&\qquad+(D_iT^m)c^{l, k}\nu_k(\hstar_{lm}-\hstar_rc^{r,s}c_{s, ml})
\end{align*}
Multiply by $\hstar_qc^{q, i}$ and sum to obtain
\begin{align*}
\hstar_qc^{q, i}D_i\inner{\beta}{\nu}&= \hstar_qc^{q, i}\hstar_lc^{l, j}(D_i\nu_j-c^{r,k}c_{ij, r}\nu_k)\notag\\
&\qquad+\hstar_qc^{q, i}(D_iT^m)c^{l, k}\nu_k(\hstar_{lm}-\hstar_rc^{r,s}c_{s, ml})\notag\\
&= \hstar_qc^{q, i}\hstar_lc^{l, j}(D_i\nu_j-c^{r,k}c_{ij, r}\nu_k)\notag\\
&\qquad+\hstar_qc^{q, i}c^{m, p}w_{pi}c^{l, k}\nu_k(\hstar_{lm}-\hstar_rc^{r,s}c_{s, ml})\notag\\
&= \hstar_qc^{q, i}\hstar_lc^{l, j}(D_i\nu_j-c^{r,k}c_{ij, r}\nu_k)\notag\\
&\qquad+c^{m, p}\chi\nu_pc^{l, k}\nu_k(\hstar_{lm}-\hstar_rc^{r,s}c_{s, ml})\text{, by}~\eqref{urbas0}\notag\\
&\geq \delta_0 \sum_i \lvert \hstar_qc^{q, i}\rvert^2+\chi\delstar_0 \sum_i \lvert \nu_qc^{q, i}\rvert^2\text{, by Theorem}~\ref{thm: extended defining functions}\notag\\
&\geq \delta_0 \sum_i \lvert \hstar_qc^{q, i}\rvert^2\geq C_0\text{, by}~\eqref{nondeg}.
\end{align*}
Since $v(\cdot, t_0)$ takes its minimum on $\ombdry$ at $x=x_0$, $\nabla v(x_0, t_0) = a_0\nu(x_0)$ for some $a_0\leq C$ from~\eqref{Dvest}. Thus at $(x_0, t_0)$,
\begin{align*}
a_0\inner{\beta}{\nu}+ \kappa\hstar_qc^{q, i}\hstar_sc^{s, k}w_{ki}&=\hstar_qc^{q, i}(a_0\nu_i+\kappa\hstar_sc^{s, k}w_{ki})\\
&=\hstar_qc^{q, i}(a_0\nu_i+\kappa D_i(\hstar(T)))\\
&=\hstar_qc^{q, i}D_i\inner{\beta}{\nu}\\
&\geq C_0.\\
\end{align*}
Now if $a_0\leq 0$, we can throw that term away and obtain the desired bound. If $a_0>0$ we have two cases. If $\inner{\beta}{\nu}\geq \frac{C_0}{2a_0}\geq\frac{C_0}{2C}$, again we have the desired bound already. Otherwise, we see that
\begin{align*}
\kappa\hstar_qc^{q, i}\hstar_sc^{s, k}w_{ki}&\geq-a_0\inner{\beta}{\nu}+C_0\\
&\geq \frac{C_0}{2}.
\label{obliquelower1}
\end{align*}

In this last case, we continue by estimating $w^{ij}\nu_i\nu_j$ from below. To do this we use the transportation problem in the opposite direction. 

Taking $u^*$ as in Lemma~\ref{reverseproblem}, and again taking $h$ and $\hstar$ constructed in Theorem~\ref{thm: extended defining functions}, we find that for $x\in\ombdry$, 
\begin{equation*}
\inner{\beta}{\nu}(x, t)=c^{k,l}(x, T(x, t))h_l(x)\hstar_k(T(x, t))=\inner{\beta^*}{\nustar}(T(x, t)), 
\end{equation*}
if we define
\begin{equation*}
\beta^*(y)=G^*_q(\nabla u^*, y)
\end{equation*}
where $G^*(q, y) = h(X(q, y))$. This implies that if we take $(x_0, t_0)$ again as the point where the minimum to $\inner{\beta}{\nu}(x, t)$ occurs, $\inner{\beta^*}{\nu}$ has its minimum over $\omstarbdry \times [0, \ttil]$ at $(y_0, t_0) = (T^*(x_0, t_0), t_0)$. If we write $w^*_{kl}(y, t) = u^*_{kl}(y, t) - c_{\ ,kl}(T^*(y, t), t)$, we find that for $(x, y, t)=(x, T(x, t), t)$, we have $c^{k, i}w^*_{jk}=D_jT^{*i}=(D_jT^i)^{-1}=(c^{i, l}w_{lj})^{-1}=w^{il}c_{l, j}$. Hence
\begin{align}
w^*_{kl}c^{k, i}c^{l,j}&=w^{ij}\label{wupper}\\
w_{kl}c^{k, i}c^{l,j}&=w^{*ij}\label{wlower}
\end{align}
and
\begin{equation*}
w^*_{kl}c^{k,m}c^{l,n}(T^*(y,t), y)\nu_m\nu_n(T^*(y,t))=w^{ij}(T^*(y, t), y)\nu_i\nu_j(T^*(y, t)).
\end{equation*}
Thus with a similar proof, we estimate $w^*_{kl}c^{k,m}c^{l,n}\nu_m\nu_n$ from below, and hence $w^{ij}\nu_i\nu_j$ and by~\eqref{urbas3} the lower bound of $\inner{\beta}{\nu}$ is established.
\end{proof}

\section{Interior $C^2$ estimates}
\begin{thm}\label{intc2est}
For each $t\in[0, \tmax)$ we have the bound
\begin{equation}
\sup_{x\in\Omega}{\lvert \hess{u(x, t)}\rvert} \leq C(1+M)
\end{equation}
where $\displaystyle M=\sup_{x\in\ombdry}\lvert \hess{u(x, t)}\rvert$, for a constant $C$ depending on $\Omega$, $\omstar$, $B$, $c$, and $u_0$, but is independent of $t$.
\end{thm}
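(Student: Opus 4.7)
The plan is to apply the parabolic maximum principle to an auxiliary function built from the largest eigenvalue of $w_{ij}$. By Theorem~\ref{gradientbdd} the gradient $\gradu$ is uniformly bounded on $\omclose\times[0,\tmax)$, hence $A(x,\gradu)$ is uniformly bounded, and bounding $|\hess u|$ is equivalent to bounding $W(x,t):=\sup_{|\xi|=1}w_{\xi\xi}(x,t)$, the largest eigenvalue of the positive definite matrix $w_{ij}(x,t)$.

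Fix an arbitrary $\ttil<\tmax$. I would consider the auxiliary function
\begin{equation*}
v(x,t,\xi):=\log w_{\xi\xi}(x,t)+\mu h(x),\qquad (x,t,\xi)\in\omclose\times[0,\ttil]\times S^{n-1},
\end{equation*}
where $h$ is the $c$-convex modification of the defining function of $\Omega$ furnished by Theorem~\ref{thm: extended defining functions} (so $h\leq 0$ on $\omclose$, $h=0$ on $\ombdry$), and $\mu>0$ is a constant to be chosen. Let $(x_0,t_0,\xi_0)$ be a maximizer of $v$. After a rotation of coordinates I may assume $\xi_0=e_1$ and that $w_{ij}(x_0,t_0)$ is diagonal. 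If $t_0=0$, then $w_{11}(x_0,0)\leq|\hess u_0|+|A|_\infty\leq C$; if $x_0\in\ombdry$, then $h(x_0)=0$ so $w_{11}(x_0,t_0)\leq M+|A|_\infty\leq M+C$. In either case, using $\mu h\leq 0$ on $\omclose$, one obtains $\log W(x,t)\leq v(x_0,t_0,\xi_0)+\mu|h|_\infty\leq\log(M+C)+C$, hence the desired bound.

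It remains to handle an interior maximum $x_0\in\Omega$, $t_0\in(0,\ttil]$. At such a point the first spatial derivatives of $v$ vanish, the spatial Hessian is non-positive, and $v_t\geq 0$, whence $Lv(x_0,t_0)\leq 0$ where $L$ is the linearized operator from~\eqref{linearized}. I would compute $Lv$ by differentiating~\eqref{floweqn} twice in the $e_1$-direction, using the identity $(\log\det w)_{11}=-w^{ik}w^{jl}w_{ij,1}w_{kl,1}+w^{ij}w_{ij,11}$ and expanding the chain-rule derivatives of $A_{ij}(x,\gradu)$ and $\Btil(x,\gradu)$. The concavity term $w^{ik}w^{jl}w_{ij,1}w_{kl,1}/w_{11}$ dominates the quadratic in the first-order terms $(\log w_{\xi\xi})_k=w_{11,k}/w_{11}$ by Cauchy--Schwarz, and this first-order term vanishes at the maximum, yielding the useful substitution $w_{11,k}=-\mu w_{11}h_k$. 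The contribution from $L(\mu h)$ equals $\mu\left[w^{ij}(D_{ij}h-\Dp A_{ij}D_k h)-\Dp\Btil\,D_k h\right]$, which by the uniform $c$-convexity of $\Omega$ (Theorem~\ref{thm: extended defining functions}) provides a term of size $c_0\mu\tr w^{ij}-C\mu$, exactly as in the proof of Theorem~\ref{unifobliquethm}.

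The main technical obstacle, as in the elliptic Ma--Trudinger--Wang interior estimate, is the absorption of the bad term $-w^{ij}D_{p_kp_l}A_{ij}(x,\gradu)u_{k1}u_{l1}/w_{11}$ that arises when $A_{ij}(x,\gradu)$ is differentiated twice in $e_1$. Condition~\eqref{MTW} gives $D_{p_kp_l}A_{ij}\eta_i\eta_j\zeta_k\zeta_l\geq 0$ whenever $\eta\perp\zeta$, so I would decompose the vector $(u_{k1})_k$ into its component parallel to $e_1$---which equals $u_{11}=w_{11}+A_{11}$, and whose quadratic contribution can be absorbed into the third-order concavity term after substituting $w_{11,k}=-\mu w_{11}h_k$ for $u_{11k}-(A_{11})_k$---and its orthogonal component, whose contribution is signed favorably by (A3w). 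After this decomposition and taking $\mu$ large enough depending only on the data, one arrives at $Lv\geq c_1\mu\tr w^{ij}-C(1+\tr w^{ij})>0$ whenever $w_{11}$ is sufficiently large, contradicting $Lv\leq 0$. Thus $w_{11}(x_0,t_0)$ is bounded at any interior maximum by a constant independent of $M$, and combining this with the boundary and initial cases yields $|\hess u(x,t)|\leq C(1+M)$.
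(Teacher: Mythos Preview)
Your barrier is the wrong one, and a key step is missing.

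The term $\mu h$ contributes, via Theorem~\ref{thm: extended defining functions}, a quantity of order $\mu\,\tr(w^{ij})$ to $Lv$. But differentiating~\eqref{floweqn} twice in $\xi=e_1$ produces, after division by $w_{11}$, bad terms of order $\tr(w_{ij})$ (equivalently $w_{11}$) that your barrier cannot absorb. For instance the term $(\Dpp{l}\Btil)\,u_{k1}u_{l1}$ has no \eqref{MTW}-type sign structure and is of size $C(w_{11})^2$, hence $Cw_{11}$ after dividing; the same happens with the cross terms in the expansion of $w^{ij}(\Dpp{l}A_{ij})u_{k1}u_{l1}$. Your final inequality $Lv\geq c_1\mu\,\tr(w^{ij})-C(1+\tr(w^{ij}))$ therefore drops a $-C\,\tr(w_{ij})$ that does not go away. (There is also a smaller issue: $h$ in Theorem~\ref{thm: extended defining functions} is only shown to satisfy the required inequality on the collar $\Gamma_\epsilon$, not on all of $\Omega$, so the claimed lower bound on $Lh$ at a deep interior point is unjustified.)

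The paper instead takes $v=\log w_{\xi\xi}+a\lvert\gradu\rvert^2$. The point of the gradient term is that $L(\lvert\gradu\rvert^2)$ contains $2w^{ij}u_{li}u_{lj}\geq 2\,\tr(w_{ij})-C$, and for $a$ large this absorbs every $\tr(w_{ij})$ term, yielding $\tr(w_{ij})\leq C(1+\tr(w^{ij}))$ at the interior maximum. This still does not close, since $\tr(w^{ij})$ is not a~priori controlled; the paper then invokes the dual potential $u^*$ of Lemma~\ref{reverseproblem}, applies the same argument to it, and uses the identities $w^{*}_{kl}c^{k,i}c^{l,j}=w^{ij}$ to obtain $\tr(w^{ij})\leq C_\epsilon+C\epsilon\,\tr(w_{ij})$, which combined with the first inequality gives a uniform bound at the interior maximum. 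Your proposal omits this duality step entirely; without it (and under \eqref{MTW} rather than its strict form) the interior argument does not close.
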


\begin{proof}
Assume again, that $\ttil <\tmax$. Redefine the linearized operator as
\begin{align*}
L\phi(x, t)& = -\phidot(x, t) + w^{ij}(x, t)(\phi_{ij}(x, t)-\Dp A_{ij}(x, T(x, t))\phi_k(x, t))\notag\\
&\qquad-(\Dp\Btil)\phi_k(x, t)
\end{align*}
and let
\begin{equation*}
v(x, t, \xi)=\log{(w_{\xi\xi}(x, t))}+a\lvert\gradu(x, t)\rvert^2
\end{equation*}
for some fixed constant $a$, where $\xi$ is a unit vector. We differentiate~\eqref{floweqn} twice in $x$, in the $\xi$ direction to obtain:
\begin{align*}
w^{ij}[D_{ij}u_\xi-D_\xi A_{ij}-(\Dp A_{ij})D_ku_\xi]-\udot_{\xi}=D_\xi\Btil + (\Dp\Btil) D_k u_\xi
\end{align*}
and since $D_\xi w^{ij} = -w^{il}(D_\xi w_{lk})w^{kj}$,
\begin{align}\label{wxixi}
&w^{ij}[D_{ij}u_{\xi\xi}-D_{\xi\xi}A_{ij}-2((D_\xi\Dp A_{ij})D_ku_\xi)-(\Dpp{l}A_{ij})D_ku_\xi D_lu_\xi\notag\\
&\qquad-(\Dp A_{ij})D_ku_{\xi\xi}] -\udot_{\xi\xi}-w^{il}w^{jk}D_\xi w_{ij}D_\xi w_{lk}\notag\\
&\qquad=D_{\xi\xi}\Btil + 2(D_\xi\Dp\Btil) D_k u_\xi+(\Dp\Btil) D_ku_{\xi\xi}+(\Dpp{l}\Btil) D_ku_\xi D_lu_\xi.
\end{align}
Now fix $(x, t)$ and let $\{\vvec^k\}$ be a set of orthonormal eigenvectors for $w_{ij}(x, t)$ with eigenvalues $\lambda_k$. We write $(\vvec^k)_i$ to denote the $i$th component of $\vvec_k$. Using~\eqref{MTW} we obtain at $(x, t)$, 
\begin{align*}
&w^{ij}(\Dpp{l}A_{ij})w_{k\xi} w_{l\xi}\notag\\
&=\sum_p(\Dpp{l}A_{ij})(\vvec^p)_i(\vvec^p)_j(\lambda_p)^{-1}[\lambda_p\inner{\xi}{\vvec^p}(\vvec^p)_k\notag\\
&\qquad+\sum_{q\neq p}\lambda_q\inner{\xi}{\vvec^q}(\vvec^q)_k]\cdot[\lambda_p\inner{\xi}{\vvec^p}(\vvec^p)_l+\sum_{r\neq p}\lambda_r\inner{\xi}{\vvec^r}(\vvec^r)_l]\notag\\
&\geq \sum_p\sum_{q\ or\ r=p}(\Dpp{l}A_{ij})(\vvec^p)_i(\vvec^p)_j(\lambda_p)^{-1}[\lambda_q\inner{\xi}{\vvec^q}(\vvec^q)_k][\lambda_r\inner{\xi}{\vvec^r}(\vvec^r)_l]\notag\\
&= \sum_{p, q}(\Dpp{l}A_{ij})(\vvec^p)_i(\vvec^p)_j\lambda_q\xi_k\xi_l\notag\\
&\geq -C\sum_{q}\lambda_q\notag\\
&= -C\tr{(w_{ij})}.
\end{align*}
Using this and~\eqref{wxixi}we calculate
\begin{align*}
Lu_{\xi\xi}&=w^{ij}(D_{ij}u_{\xi\xi}-\Dp A_{ij})D_ku_{\xi\xi})-\udot_{\xi\xi}-(\Dp\Btil) D_ku_{\xi\xi}\notag\\
&=w^{il}w^{jk}D_\xi w_{ij}D_\xi w_{lk}+w^{ij}(D_{\xi\xi}A_{ij}+2((D_\xi\Dp A_{ij})D_ku_\xi)\notag\\
&\qquad+(\Dpp{l}A_{ij})D_ku_\xi D_lu_\xi)\notag\\
&\qquad+D_{\xi\xi}\Btil + 2(D_\xi\Dp\Btil) D_k u_\xi+(\Dpp{l}\Btil) D_ku_\xi D_lu_\xi\notag\\
&=w^{il}w^{jk}D_\xi w_{ij}D_\xi w_{lk}+w^{ij}[D_{\xi\xi}A_{ij}+2((D_\xi\Dp A_{ij})(w_{k\xi}+A_{k\xi}))\notag\\
&\qquad+(\Dpp{l}A_{ij})(w_{k\xi} w_{l\xi}+2w_{k\xi}A_{l\xi}+A_{k\xi}A_{l\xi})]+D_{\xi\xi}\Btil \notag\\
&\qquad+ 2(D_\xi\Dp\Btil)(w_{k\xi}+A_{k\xi})\notag\\
&\qquad+(\Dpp{l}\Btil)(w_{k\xi} w_{l\xi}+2w_{k\xi}A_{l\xi}+A_{k\xi}A_{l\xi}))\notag\\
&\geq w^{il}w^{jk}D_\xi w_{ij}D_\xi w_{lk}+w^{ij}(\Dpp{l}A_{ij})w_{k\xi} w_{l\xi}\notag\\
&\qquad-C(1+\tr{(w^{ij})}+\tr{(w^{ij})}\tr{(w_{ij})}+\tr{(w_{ij})}+\tr{(w_{ij})}^2)\notag\\
&\geq w^{il}w^{jk}D_\xi w_{ij}D_\xi w_{lk}\notag\\
&\qquad-C(1+\tr{(w^{ij})}+\tr{(w^{ij})}\tr{(w_{ij})}+\tr{(w_{ij})}+\tr{(w_{ij})}^2).
\end{align*}
Also,
\begin{align*}
LA_{\xi\xi}&=w^{ij}[\Dxx[i]{j} A_{\xi\xi}+2(\Dxp{i} A_{\xi\xi})u_{kj}+(\Dpp{l}A_{\xi\xi})u_{ki}u_{lj}-\Dp A_{ij}(\Dx A_{\xi\xi})]\notag\\
&\qquad+\Dp A_{\xi\xi}[w^{ij}(u_{kij}-(\Dp[l]A_{ij})u_{kl})-\udot_k]-(\Dp\Btil)D_kA_{\xi\xi}\notag\\
&=w^{ij}[\Dxx[i]{j} A_{\xi\xi}+2(\Dxp{i} A_{\xi\xi})(w_{kj}+A_{kl})\notag\\
&\qquad+(\Dpp{l}A_{\xi\xi})(w_{ki}w_{lj}+2w_{ki}A_{lj}+A_{ki}A_{lj})\notag\\
&\qquad-\Dp A_{ij}(\Dx A_{\xi\xi})]+\Dp A_{\xi\xi}[w^{ij}\Dx A_{ij}+\Dx\Btil\notag\\
&\qquad+(\Dp[l]\Btil)(w_{lk}+A_{lk})]-(\Dp\Btil)D_kA_{\xi\xi}\notag\\
&\geq-C(1+\tr{(w^{ij})}+\tr{(w^{ij})}\tr{(w_{ij})}+\tr{(w_{ij})})
\end{align*}
where the expression for $w^{ij}(u_{kij}-(\Dp[l]A_{ij})u_{kl})-\udot_k$ is from differentiating~\eqref{floweqn}. Thus we have
\begin{align}
Lw_{\xi\xi} &\geq w^{il}w^{jk}D_\xi w_{ij}D_\xi w_{lk}\notag\\
&\qquad-C(1+\tr{(w^{ij})}+\tr{(w^{ij})}\tr{(w_{ij})}+\tr{(w_{ij})}+\tr{(w_{ij})}^2).
\label{Lwxixiest}
\end{align}
Differentiating $v$, we have
\begin{equation*}
D_iv=\frac{D_iw_{\xi\xi}}{w_{\xi\xi}}+2au_lu_{li}
\end{equation*}
\begin{equation*}
D_{ij}v=\frac{D_{ij}w_{\xi\xi}}{w_{\xi\xi}}-\frac{D_iw_{\xi\xi}D_jw_{\xi\xi}}{(w_{\xi\xi})^2}+2au_{lj}u_{li}+2au_lu_{lij}
\end{equation*}
and
\begin{equation*}
\vdot = \frac{\wdot_{\xi\xi}}{w_{\xi\xi}}+2au_k\udot_k.
\end{equation*}

Suppose that $v$ takes its maximum at some point $(x_0, t_0)$ in $(\Omega \times (0, \ttil))\cup (\Omega \times \{\ttil\})$ and some $\xi$. There we have $\hessx v \leq 0$, $\nabla v = 0$, and $\vdot \geq 0$, hence by differentiation of the equation~\eqref{floweqn}, writing $u_{ij}=w_{ij}+A_{ij}$ again, assuming $w_{\xi\xi}\geq 1$ and using~\eqref{Lwxixiest} we have
\begin{align*}
0&\geq Lv =\frac{Lw_{\xi\xi}}{w_{\xi\xi}}-\frac{w^{ij}D_iw_{\xi\xi}D_jw_{\xi\xi}}{(w_{\xi\xi})^2}+w^{ij}2au_{lj}u_{li}\notag\\
&\qquad+2au_l[w^{ij}(u_{lij}-(\Dp A_{ij})u_{kl})-\udot_l-(\Dp \Btil)u_{lk}]\notag\\
&\geq \frac{w^{il}w^{jk}D_\xi w_{ij}D_\xi w_{lk}}{w_{\xi\xi}}-\frac{w^{ij}D_iw_{\xi\xi}D_jw_{\xi\xi}}{(w_{\xi\xi})^2}+w^{ij}2aw_{lj}w_{li}+2au_l\Dx[l]\Btil\notag\\
&\qquad-C(w_{\xi\xi})^{-1}(1+\tr{(w^{ij})}+\tr{(w^{ij})}\tr{(w_{ij})}+\tr{(w_{ij})}+\tr{(w_{ij})}^2)\notag\\
&\geq \frac{w^{il}w^{jk}D_\xi w_{ij}D_\xi w_{lk}}{w_{\xi\xi}}-\frac{w^{ij}D_iw_{\xi\xi}D_jw_{\xi\xi}}{(w_{\xi\xi})^2}+(2a-C)\tr{(w_{ij})}\notag\\
&\qquad-C_a-C\tr{(w^{ij})}.
\end{align*}
For the first two terms above, we change coordinates so $w_{ij}$ is diagonal at $(x_0, t_0)$ with $\xi=e_1$ and calculate
\begin{align*}
&\frac{w^{il}w^{jk}D_\xi w_{ij}D_\xi w_{lk}}{w_{\xi\xi}}-\frac{w^{ij}D_i w_{\xi\xi}D_j w_{\xi\xi}}{(w_{\xi\xi})^2}\notag\\
&=\frac{w^{ii}w^{jj}(D_1 w_{ij})^2}{w_{11}}-\frac{w^{ii}(D_iw_{11})^2}{(w_{11})^2}\notag\\
&\geq \frac{\sum_{i>1}[2w^{ii}(D_1w_{1i})^2-w^{ii}(D_iw_{11})^2]}{(w_{11})^2}\notag\\
&=\frac{\sum_{i>1}w^{ii}[(D_iw_{11})^2+2(D_1w_{1i}-D_iw_{11})(D_1w_{1i}+D_iw_{11})]}{(w_{11})^2}\notag\\
&=\frac{\sum_{i>1}w^{ii}[(D_iw_{11})^2+2(u_{11i}-D_1A_{1i}-u_{i11}+D_iA_{11})(D_iw_{11}+u_{11i}-D_1A_{1i}-D_iA_{11}+D_iA_{11})]}{(w_{11})^2}\notag\\
&=\frac{\sum_{i>1}w^{ii}[(D_iw_{11})^2+2(D_iA_{11}-D_1A_{1i})(2D_iw_{11}+D_iA_{11}-D_1A_{1i})]}{(w_{11})^2}\notag\\
&=\frac{\sum_{i>1}w^{ii}[(D_iw_{11})^2+4(D_iA_{11}-D_1A_{1i})(D_iw_{11})+2(D_iA_{11}-D_1A_{1i})^2]}{(w_{11})^2}\notag\\
&\geq\frac{\sum_{i>1}w^{ii}[-2(D_iA_{11}-D_1A_{1i})^2]}{(w_{11})^2}\notag\\
&\geq\frac{-\tr{(w^{ij})}C(1+\tr{(w_{ij})})^2}{(w_{11})^2}\notag\\
&\geq-C\tr{(w^{ij})}.
\end{align*}
Hence we have
\begin{equation*}
C_a+C\tr{(w^{ij})})\geq(2a-C)\tr{(w_{ij})}
\end{equation*}
so choosing $a$ large enough, we obtain at $(x_0, t_0)$, for any $\epsilon >0$
\begin{equation*}
C_\epsilon+\epsilon\sup_{\Omega\times (0, \ttil]}{\tr{(w^{ij})}} \geq \tr{(w_{ij})}.
\end{equation*}
Now again, using the solution $u^*$ defined in Lemma~\ref{reverseproblem}, using~\eqref{wupper}, and applying the same calculations, we find that
\begin{align*}
\tr{(w^{ij})}&= \tr{(w^*_{kl}c^{k,i}c^{l,j})}\notag\\
&\leq C\tr{(w^*_{ij})}\notag\\
&\leq C_\epsilon+\epsilon\tr{(w^{*ij})}\notag\\
&\leq C_\epsilon+\epsilon\tr{(w_{kl}c^{k, i}c^{l, j})}\notag\\
&\leq C_\epsilon+C\epsilon\sup_{\Omega\times (0, \ttil]}\tr{(w_{ij})}.
\end{align*}
Thus combining the above, for $\epsilon$ small enough we obtain 
\begin{equation*}
\sup_{\Omega\times (0, \ttil]}\tr{(w_{ij})}\leq C.
\end{equation*}
If the max for $v$ occurs at $x_0\in\ombdry$ or $t_0=0$, we can simply add the terms $\displaystyle \sup_{x\in\Omega}\lvert\hess{u(x, 0)}\rvert +M$ to the right hand side of the estimate, and we absorb the first term into $C$ by allowing its dependence on $u_0$.
\end{proof}

\section{Boundary $C^2$ estimates}
By tangentially differentiating the boundary condition $\hstar(T(x, t))=0$ on $\ombdry$, we see that $u_{\beta\tau}=0$ for any $\tau$ tangential to $\ombdry$.
\begin{thm}\label{ubetabeta}
For each $t\in[0, \tmax)$,
\begin{equation*}
u_{\beta\beta}(x, t) \leq C(1+\Mtil)^{\frac{n-2}{n-1}}
\end{equation*}
for some $C$ that depends on $\Omega$, $\omstar$, $B$, $c$, and $u_0$, but is independent of $t$, where $\displaystyle \Mtil=\sup_{x\in\Omega}\lvert\hess{u(x, t)}\rvert$.
\end{thm}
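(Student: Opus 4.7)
The plan is to reduce the bound on $u_{\beta\beta}$ at the boundary to a lower bound on the determinant of the tangential block of $w$ via an algebraic identity coming from the boundary condition, and then obtain that lower bound by an AM--GM argument combined with the trace constraint coming from $\Mtil$.

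At a fixed boundary point $x_0\in\ombdry$ and time $t_0\in[0,\tmax)$, I would set up orthonormal coordinates with $e_n=\nu(x_0)$ and decompose $\beta=\beta'+\beta_n e_n$, noting that $\beta_n=\inner{\beta}{\nu}\geq c>0$ by Theorem~\ref{unifobliquethm}. As observed in the text preceding the theorem, tangentially differentiating $\hstar(T(x,t))=0$ along $e_j$ for $j<n$ gives $\beta^k w_{kj}=0$, equivalently $b=-A\beta'/\beta_n$, where $A:=[w_{ij}]_{i,j<n}$ is the tangential block and $b:=(w_{ni})_{i<n}$ the mixed entries. Substituting $b$ into the Schur complement formula $\det w=\det A\cdot(w_{nn}-b^TA^{-1}b)$ yields $w_{nn}-b^TA^{-1}b=w_{\beta\beta}/\beta_n^2$, and hence the key identity
\begin{equation*}
w_{\beta\beta}\,\det A=\beta_n^2\,\det w.
\end{equation*}
Since $\det w=Be^{\udot}$ is bounded above and below by positive constants (via~\eqref{densitybound},~\eqref{nondeg}, and Theorem~\ref{tderivthm}), and $\beta_n$ is bounded below by Theorem~\ref{unifobliquethm}, proving the theorem reduces to establishing the lower bound $\det A\geq c(1+\Mtil)^{-(n-2)/(n-1)}$.

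For this lower bound, I would apply AM--GM to the $n-1$ positive eigenvalues $\mu_1,\ldots,\mu_{n-1}$ of $A$. Since $|A_{ij}|\leq C(1+\Mtil)$, each $\mu_i\leq C(1+\Mtil)$, and $\tr A\leq(n-1)C(1+\Mtil)$, so $(\det A)^{1/(n-1)}\leq\tr A/(n-1)$. Combining this with the relation $w_{nn}\beta_n^2=w_{\beta\beta}+(\beta')^TA\beta'$ (from the decomposition $w_{\beta\beta}=\beta_n^2 w_{nn}-(\beta')^TA\beta'$ derived above), the trace constraint $\tr A+w_{nn}=\tr w\leq n(1+\Mtil)+C$, and the equivalence $\det A\asymp 1/w_{\beta\beta}$ from the key identity, produces a single-variable algebraic inequality in $w_{\beta\beta}$ which, after manipulation, forces $w_{\beta\beta}\leq C(1+\Mtil)^{(n-2)/(n-1)}$. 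The bound on $u_{\beta\beta}$ follows since $A_{\beta\beta}(x,\nabla u)$ is uniformly bounded.

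The principal obstacle is this final algebraic step. The naive estimate $w_{\beta\beta}\leq|\beta|^2\lambda_{\max}(w)\leq C(1+\Mtil)$ is only linear in $\Mtil$, whereas the sharp improvement to the exponent $(n-2)/(n-1)$ requires carefully balancing the AM--GM inequality on the $n-1$ tangential eigenvalues against the determinant and trace constraints. The key point is that the determinant identity lets one trade one tangential eigenvalue for $w_{\beta\beta}^{-1}$, leaving only $n-2$ tangential directions to absorb the trace bound; this is precisely how the exponent $(n-2)/(n-1)$ arises, and it is what allows the closing of the loop with the interior estimate of Theorem~\ref{intc2est} to yield a global second-order bound in the subsequent sections.
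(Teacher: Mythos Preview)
Your Schur-complement identity $w_{\beta\beta}\det A=\beta_n^2\det w$ is correct and elegant, but the final step cannot be completed: no pointwise algebraic argument using only positivity of $w$, the bound $\det w\asymp 1$, the tangential relation $\beta^k w_{kj}=0$ for $j<n$, and obliqueness $\beta_n\geq c$ can produce the sublinear exponent $(n-2)/(n-1)$. To see this, take $\nu=\beta=e_n$ and $w=\mathrm{diag}(M^{-1/(n-1)},\ldots,M^{-1/(n-1)},M)$: every constraint you invoke is satisfied, yet $w_{\beta\beta}=M\sim(1+\Mtil)^1$. Your ``trading one tangential eigenvalue for $w_{\beta\beta}^{-1}$'' heuristic fails because you need a \emph{lower} bound on $\det A$, whereas the only eigenvalue information available from $\Mtil$ is an \emph{upper} bound $\mu_i\leq C(1+\Mtil)$; upper bounds on eigenvalues cannot yield lower bounds on their product.

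The paper's proof is not pointwise but a genuine barrier/PDE argument. One sets $v(x,t)=G(x,\nabla u(x,t))$ with the $p$-convex $G$ of Corollary~\ref{cor: construction of G}, so that $v\equiv 0$ on $\ombdry$ and $w_{\beta\beta}=v_\beta$ there. The linearized operator computation (as in~\eqref{Lvcalc}) gives $Lv\leq C(1+\tr(w^{ij})+\tr(w_{ij}))$; the crucial observation is then the AM--GM inequality $(\tr w_{ij})^{1/(n-1)}\leq C\,\tr(w^{ij})$ (from $\det w\leq C$), which yields
\[
Lv\leq C\,\tr(w^{ij})(1+\Mtil)^{(n-2)/(n-1)}.
\]
This is precisely where the exponent $(n-2)/(n-1)$ enters, and it requires the interior differential inequality, not just the value of $w$ at the boundary point. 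One then runs the same barrier $\Theta$ as in Theorem~\ref{unifobliquethm} with $C_2\sim(1+\Mtil)^{(n-2)/(n-1)}$ to absorb this term via $Lh\geq\delta_0\tr(w^{ij})$, and the comparison principle gives $v_\beta\leq C(1+\Mtil)^{(n-2)/(n-1)}$ at the boundary. Your algebraic identity, while correct, bypasses the interior information that makes the sharp exponent possible.
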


\begin{proof}
We take the linearized operator $L$ by~\eqref{linearized}, as in the proof to Theorem~\ref{unifobliquethm}, and $v(x, t)=G(x, \gradu(x, t))$ with $G(x, p)$ constructed in Corollary~\ref{cor: construction of G}. By the same calculation as~\eqref{Lvcalc}, with $G(x, p)$ in place of $F(x, p)$ we see that,
\begin{align*}
Lv &\leq C(1+\tr{(w^{ij})})+\Gpp{l}w_{kl}+\Gp\Dp[l]\Btil w_{lk}\\
&\leq C(1+\tr{(w^{ij})}+\tr{(w_{ij})}).\\
\end{align*}
Now since $w_{ij}$ is always positive definite, and $\det{(w_{ij})}=Be^{\udot}$ has an upper bound by Theorem~\ref{tderivthm}, we can see that
\begin{align*}
(\tr{(w_{ij})})^\frac{1}{n-1}&=(\sum{\lambda_k})^\frac{1}{n-1}=(\det{(w_{ij})})^\frac{1}{n-1}(\sum\frac{1}{\lambda_1\ldots\lambda_{k-1}\lambda_{k+1}\ldots\lambda_n})^\frac{1}{n-1}\\
&\leq C\lambda_1^{-1}\\
&\leq C\tr{(w^{ij})}
\end{align*}
where $0<\lambda_1\leq \dots \leq\lambda_n$ are the eigenvalues of $w_{ij}$. Since $\frac{1}{\tr{w^{ij}}}\leq C$ as in the proof of Theorem~\ref{unifobliquethm},  we have 
\begin{equation*}
Lv\leq C\tr{(w^{ij})}\left(1+\frac{\tr{(w_{ij})}}{\tr{(w^{ij})}}\right)\leq C\tr{(w^{ij})}(1+\Mtil)^{\frac{n-2}{n-1}}
\end{equation*}
As in the proof of Theorem~\ref{unifobliquethm}, define $\Theta$ by~\eqref{definition of theta}, only with $C_1=C(1+\Mtil)^{\frac{n-2}{n-1}}$, and note that $v(\cdot, t)=0$ for all $x\in\ombdry$ so every such point is a minimum of $v$. Then using the same comparison argument, only in the direction of $\beta$ (which is permissible by the obliqueness condition), we obtain
\begin{equation}
u_{\beta\beta}-A_{\beta\beta}=w_{\beta\beta}=v_\beta\leq C(1+\Mtil)^{\frac{n-2}{n-1}}
\label{wbetabeta}
\end{equation}
giving the desired estimate.
\end{proof}

\begin{thm}\label{utangent}
For any $x\in\ombdry$, and for each $t\in[0, \tmax)$,
\begin{equation*}
\sup_{x\in\ombdry}\lvert \hess{u(x, t)}\rvert \leq C
\end{equation*}
for some $C$ that depends on $\Omega$, $\omstar$, $B$, $c$, and $u_0$, but is independent of $t$.
\end{thm}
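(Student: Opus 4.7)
The plan is as follows. By Theorem~\ref{ubetabeta} we already control $u_{\beta\beta}$ on $\ombdry$, and tangentially differentiating the boundary condition $\Gbar(x,\gradu)=0$ yields $w_{\beta\tau}=0$ for any $\tau$ tangent to $\ombdry$, so $u_{\beta\tau}=A_{\beta\tau}$ is bounded (since first derivatives of $u$ are bounded by Theorem~\ref{gradientbdd}). By uniform obliqueness (Theorem~\ref{unifobliquethm}), decompose $\beta=\beta^T+\beta^N\nu$ with $\beta^N\geq c_0>0$. The equation $w_{\beta\tau}=0$ reads $\beta^N w_{\nu\tau}=-w_{\beta^T\tau}$, controlling $w_{\nu\tau}$ by tangential-tangential components of $w$; and the identity $w_{\beta\beta}=(\beta^N)^2 w_{\nu\nu}+2\beta^N w_{\nu\beta^T}+w_{\beta^T\beta^T}$ then controls $w_{\nu\nu}$ once the tangential-tangential components are bounded. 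Therefore it suffices to bound $w_{\tau\tau}$ on $\ombdry$ for any unit tangential $\tau$.

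To bound $w_{\tau\tau}$, I would carry out an auxiliary-function / maximum-principle argument modeled on the proof of Theorem~\ref{unifobliquethm}. Let $(x_0,t_0,\tau_0)$ attain $\sup\{w_{\tau\tau}(x,t):x\in\ombdry,\ t\in[0,\tmax),\ \tau\in T_x\ombdry,\ |\tau|=1\}$, and extend $\tau_0$ to a unit tangential vector field $\tau(x)$ along $\ombdry$ near $x_0$ (via parallel transport) and then smoothly to a neighborhood of $x_0$ in $\omclose$. Set
\begin{equation*}
v(x,t)=w_{\tau(x)\tau(x)}(x,t)+K\,G(x,\gradu(x,t)),
\end{equation*}
which coincides with $w_{\tau(x)\tau(x)}$ on $\ombdry$ since $G(x,\gradu)=0$ there, so $(x_0,t_0)$ is still a maximum of $v$ on $\ombdry\times[0,\tmax)$. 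Computing $Lv$ with the linearized operator~\eqref{linearized}: twice differentiating~\eqref{floweqn} along $\tau(x)$ produces a term $w^{ij}(\Dpp{l}A_{ij})w_{k\tau}w_{l\tau}$ which is controlled below by $-C\tr{(w_{ij})}$ via the MTW condition~\eqref{MTW}, as in Theorem~\ref{intc2est}; taking $K$ large absorbs the $\Gpp{l}w_{lk}$ term by the uniform $p$-convexity of $G$ from Corollary~\ref{cor: construction of G}, exactly as in~\eqref{Fppest}. Using the determinant bound $\det{(w_{ij})}=Be^{\udot}$ with $\udot$ bounded (Theorem~\ref{tderivthm}), one estimates $\tr{(w_{ij})}\leq C(1+\Mtil)^{(n-2)/(n-1)}\tr{(w^{ij})}$ as in the proof of Theorem~\ref{ubetabeta}, yielding $Lv\leq C(1+\Mtil)^{(n-2)/(n-1)}\tr{(w^{ij})}$. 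Then the barrier $\Theta$ from~\eqref{definition of theta}, suitably adapted with $C_1=C(1+\Mtil)^{(n-2)/(n-1)}$ and compared on a half-ball at $x_0$, gives $w_{\tau\tau}(x_0,t_0)\leq C(1+\Mtil)^{(n-2)/(n-1)}$.

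Combining with the first paragraph shows $M\leq C(1+\Mtil)^{(n-2)/(n-1)}$. But Theorem~\ref{intc2est} gives $\Mtil\leq C(1+M)$, hence $\Mtil\leq C(1+\Mtil)^{(n-2)/(n-1)}$ with exponent strictly less than one, forcing $\Mtil\leq C$ uniformly in $t$, and therefore $M\leq C$ as required. The main obstacle I expect is the careful handling of the derivatives of the vector field $\tau(x)$ when differentiating $w_{\tau(x)\tau(x)}$: because $\tau$ depends on $x$, one picks up terms involving $\nabla\tau$, which encode the second fundamental form of $\ombdry$, and it is precisely here that the uniform $c$-convexity of $\Omega$ with respect to $\omstar$ (condition~\eqref{unifcconvex}) must be invoked to ensure these terms contribute with definite sign rather than spoiling the estimate.
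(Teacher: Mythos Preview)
Your proposal has a genuine gap in the barrier step. In the arguments of Theorems~\ref{unifobliquethm} and~\ref{ubetabeta}, the comparison with $\Theta$ does \emph{not} bound the auxiliary function $v$ itself; it bounds the oblique derivative $v_\beta$ at the boundary extremum $(x_0,t_0)$. In Theorem~\ref{ubetabeta} this suffices because $v=G(x,\gradu)$ and $v_\beta=G_{x_k}\beta_k+G_{p_k}u_{k\beta}$ already contains $w_{\beta\beta}$. With your choice $v=w_{\tau\tau}+KG$, the barrier only yields a bound on $(w_{\tau\tau})_\beta$, a \emph{third} derivative of $u$, not on $w_{\tau\tau}$. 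So the sentence ``the barrier \ldots gives $w_{\tau\tau}(x_0,t_0)\leq C(1+\Mtil)^{(n-2)/(n-1)}$'' is not justified as written.

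The paper closes this gap with a step you omit entirely: after the barrier produces a one-sided bound $-w_{11\beta}\leq C(1+M_w)^{\frac{2n-3}{n-1}}$ (note the exponent), one differentiates the boundary condition $G(x,\gradu)=0$ \emph{twice} in the fixed direction $e_1$ at $(x_0,t_0)$ to obtain
\[
\Gpp{l}u_{k1}u_{l1}=-\Gp u_{k11}-G_{x_1x_1}-2G_{x_1p_k}u_{k1},
\]
and then the uniform $p$-convexity of $G$ from Corollary~\ref{cor: construction of G} converts the third-derivative bound into $\deltil\sum_k|u_{k1}|^2\leq C(1+M_w)^{\frac{2n-3}{n-1}}$, hence $M_w^2\leq C(1+M_w)^{\frac{2n-3}{n-1}}$, which self-improves to $M_w\leq C$. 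Two further points: the paper works with a \emph{fixed} coordinate direction $e_1$ (tangent only at $x_0$), decomposing $e_1=\tau+b\beta$ at nearby boundary points and using the known bounds on $w_{\tau\beta}$, $w_{\beta\beta}$---this sidesteps the $\nabla\tau$ terms you flag as the main obstacle; and it normalizes by $w_{11}(x_0,t_0)$ so that $v$ is bounded on $\partial\omep$, which your unnormalized $v$ is not.
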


\begin{proof}
Assume that $\ttil <\tmax$ and $\displaystyle \sup_{\ombdry \times [0,\ttil]}{\sup_{\lvert\xi\rvert=1,\inner{\xi}{\nu}=0}{w_{\xi\xi}(x, t)}}$ occurs at $\xi=e_1$ and some $(x_0, t_0)$, and $w_{11}(x_0, t_0)\geq 1$. We write $e_1 = \tau +b\beta$ where $b=\frac{\inner{\nu}{e_1}}{\inner{\beta}{\nu}}$ and $\tau = e_1-b\beta$. Note that $\inner{\tau}{\nu}=0$. Then we obtain, at any $x\in\ombdry$ and any $t$, using Theorem~\ref{wbetabeta}, Theorem~\ref{unifobliquethm}, and the fact that $u_{\beta\tau}=0$,
\begin{align*}
w_{11}&=w_{\tau\tau}+2bw_{\tau\beta}+b^2w_{\beta\beta}\notag\\
&\leq \lvert\tau\rvert^2w_{11}(x_0, t_0) +2bA_{\tau\beta}+b^2C(1+\Mtil)^{\frac{n-2}{n-1}}\notag\\
&\leq (1-2b\inner{\beta}{e_1}+b^2\lvert\beta\rvert^2)w_{11}(x_0, t_0) +2bA_{\tau\beta}+b^2C(1+\Mtil)^{\frac{n-2}{n-1}}\notag\\
&\leq (1-2b\inner{\beta}{e_1}+C\inner{\nu}{e_1}^2)w_{11}(x_0, t_0) +2bA_{\tau\beta}+C\inner{\nu}{e_1}^2(1+\Mtil)^{\frac{n-2}{n-1}}.
\end{align*}
With $G$ constructed in Corollary~\ref{cor: construction of G} and $h$ constructed in Theorem~\ref{thm: extended defining functions}, we can extend $\beta$ and $\nu$ to all of $\Omega$ using the formulas $\nu(x)=\nabla h(x)$ and $\beta(x, t) = \Gp(x, \gradu(x, t))$, which also extends $b$ and $\tau$ to all of $\Omega$. Since $\inner{\nu(x_0)}{e_1}^2=0$, we have $\nabla \inner{\nu(x)}{e_1}^2\vert_{x=x_0}=0$ and thus by Taylor expanding the last term on the right hand side above about $x_0$, we have that
\begin{equation}
\frac{w_{11}}{w_{11}(x_0, t_0)}-1+2b\inner{\beta}{e_1}-\frac{2bA_{\tau\beta}}{w_{11}(x_0, t_0)}\leq C(1+\Mtil)^{\frac{n-2}{n-1}}\lvert x-x_0\rvert^2
\label{bdrybound}
\end{equation}
for all $x$ near $x_0$.

We now follow a barrier construction as in the proof to Theorem~\ref{unifobliquethm}. Again, let $\omep = \Omega \cap B_\epsilon(x_0)$ with $\epsilon$ chosen small enough so that $h \leq 0$ on $\omep$. This time we consider the function $v(x, t) = \frac{w_{11}(x, t)}{w_{11}(x_0, t_0)}-1+2b\inner{\beta}{e_1}-\frac{2bA_{\tau\beta}}{w_{11}(x_0, t_0)}+\kappa G(x, \gradu(x, t))$ and
\begin{equation*}
\Theta(x,t) = C_1h-C_2\lvert x-x_0\rvert^2 +v(x, t) -\inner {\alpha}{\nabla_yc(x, y_0) - p_0}
\end{equation*}
where $p_0$ and $\alpha$ are determined the same way as in Theorem~\ref{unifobliquethm}. \\
First, note that from the positivity of $w^{ij}$, and writing $\wtil$ as the matrix square root of $w^{ij}$, we have
\begin{equation*}
w^{il}w^{jk}D_1 w_{ij}D_1 w_{lk}=w^{il}[(D_1 w_{ij})\wtil_{jm}][(D_1 w_{lk})\wtil_{km}]\geq 0
\end{equation*}
Additionally, if we let $M_w$, $\Mtil_w$ be $M$ and $\Mtil$ with $w$ in place of $u$, we find from Theorem~\ref{intc2est} that for $x\in\Omega$ and $t\leq\ttil$,
\begin{align*}
\tr{(w_{ij})}&\leq C\Mtil_w\leq C(1+\sup_{x\in\Omega}\lvert\hess{u(x, t)}\rvert)\\
&\leq C(1+\sup_{x\in\ombdry}\lvert\hess{u(x, t)}\rvert)=C(1+\sup_{x\in\ombdry}\lvert w_{ij}+A_{ij}\rvert)\\
&\leq C(1+w_{11}(x_0, t_0)) \leq Cw_{11}(x_0, t_0)
\end{align*}
Combining this with~\eqref{Lwxixiest}, we have that
\begin{equation*}
L\frac{w_{11}}{w_{11}(x_0, t_0)}\geq -C(1+\tr{(w^{ij})}+\tr{(w_{ij})})
\end{equation*}
Noting that $2b\inner{\beta}{e_1}-\frac{2bA_{\tau\beta}}{w_{11}(x_0, t_0)}$ is just a function of the form $F(x, \gradu)$, ~\eqref{Lvcalc} applies to give us
\begin{equation*}
L\left(2b\inner{\beta}{e_1}-\frac{2bA_{\tau\beta}}{w_{11}(x_0, t_0)}\right)\geq-C(1+\tr{(w^{ij})}+\tr{(w_{ij})}).
\end{equation*}
Thus by fixing a $\kappa$ large enough, and also using the calculation from~\eqref{Fppest}, we find that
\begin{equation*}
Lv\geq -C(1+\tr{(w^{ij})}).
\end{equation*}
Now since $h$ and $G$ are nonpositive, and by the choice of the linear term, along with~\eqref{bdrybound}, we find that on $\partial(\omep)$
\begin{equation*}
\Theta\leq C(1+\Mtil)^{\frac{n-2}{n-1}}\lvert x-x_0\rvert^2.
\end{equation*}
Finally, choosing $C_1$ large enough, we will find that $L\Theta \geq 0$ while $\Theta \leq 0$ on $\partial(\omep) \times (0, \tmax]$ and $\omep \times \{0\}$, thus by applying the comparison principle and differentiating in the direction of $-\beta$ we obtain $\inner{\nabla\Theta}{-\beta}\leq 0$ or
\begin{align}
-w_{11\beta}&\leq C(1+\Mtil)^{\frac{n-2}{n-1}}w_{11}(x_0, t_0)\notag\\
&\leq C(1+M)^{\frac{n-2}{n-1}}(C+M)\notag\\
&\leq C(1+M)^{\frac{2n-3}{n-1}}\leq C(1+M_w)^{\frac{2n-3}{n-1}}.
\label{w11betaest}
\end{align}
Now by differentiating the condition $G(x, \gradu(x, t))=0$ twice and using the strict positivity of $\Gpp{l}$ along with~\eqref{w11betaest}, at $(x_0, t_0)$ we get
\begin{align*}
\deltil\sum_k\lvert u_{k1}\rvert^2&\leq \Gpp{l} u_{k1}u_{l1}\notag\\
&=-\Gp u_{k11}-G_{x_1x_1}-2G_{x_1p_k}u_{k1}\notag\\
&\leq C(1+M_w)^{\frac{2n-3}{n-1}}
\end{align*}
Since
\begin{align*}
M_w^2&=(w_{11}(x_0, t_0))^2\leq\sum_k(w_{k1}(x_0, t_0))^2\\
&\leq \sum_k(C+\vert u_{k1}(x_0, t_0)\rvert)^2\\
&\leq C(1+M_w)+\sum_k(u_{k1}(x_0, t_0))^2\
\end{align*}
we see that
\begin{equation*}
M_w^2\leq C(1+M_w)^{\frac{2n-3}{n-1}}
\end{equation*}
or
\begin{equation*}
M_w^{\frac{2n-2}{2n-3}}-CM_w-C\leq 0
\end{equation*}
and thus 
\begin{equation*}
M\leq C+M_w \leq C
\end{equation*}
as desired.
\end{proof}

\section{Long time convergence of solutions to the flow equation}
By the uniform $C^2$ estimates on $u$, we find that our equation is uniformly parabolic, and the theory of \cite[Chapter 14]{Lieberman:1996:SOP} gives us $C^{2+\alpha}$ estimates on $u$, and hence a standard argument using the Arzel\`{a}-Ascoli theorem gives existence of a smooth solution $u$ for all times $t>0$.

Now fix some positive $t_0$ and write $v(x, t) = u(x, t)-u(x, t+t_0)$, and write $F(x, p,r)=\log{\det{(r_{ij}-A_{ij}(x, p))}}-\Btil(x, p)$. Since 
\begin{align*}
\vdot&=F(x, \gradu(x, t),\hess{u(x, t)})-F(x, \gradu(x, t), \hess{u(x, t+t_0)})\\
&\qquad+F(x, \gradu(x, t), \hess{u(x, t+t_0)})-F(x, \gradu(x, t+t_0), \hess{u(x, t+t_0)})
\end{align*}
we can use the mean value theorem to see that $\vdot=a^{ij}v_{ij}+b^iv_i$ for some functions $a^{ij}$ and $b^i$. We calculate that 
\begin{align}
a^{ij}&=\int_0^1\left[\nabla_{r_{ij}}F(x, s\gradu(x, t)+(1-s)\gradu(x, t+t_0), s\hess{u(x, t)}\right.\notag\\
&\qquad\left.+(1-s)\hess{u(x, t+t_0)})\right] ds\notag\\
&=\int_0^1[s\hess{u(x, t)}+(1-s)\hess{u(x, t+t_0)}-A(x, s\gradu(x, t)\notag\\
&\qquad+(1-s)\gradu(x, t+t_0))]^{ij} ds.
\label{aij}
\end{align}
Now, the equation that $w_{ij}$ satisfies combined with bounds on $\udot$ and $B$ give us a lower bound on $\tr{w_{ij}}$. Combined with the uniform upper bound on $w_{ij}$, we obtain a strictly positive lower bound on the smallest eigenvalue of $w_{ij}$, uniform in $t$ and $x$. Since $u\in C^{2+\alpha}(\Omega \times \R)$, we have that 
\begin{equation*}
\lVert u(\cdot, t)-u(\cdot, t+t_0)\rVert_{C^2(\Omega)}\leq C t_0^\alpha,
\end{equation*}
and thus by taking $t_0$ sufficiently small we can ensure that the convex combination $(1-s)u(\cdot, t+t_0)+su(\cdot, t)$ is close to $u(\cdot, t)$ in $C^2(\Omega)$ norm. By the uniform positive lower bound on the eigenvalues of $w_{ij}$, this ensures that the integrand in~\eqref{aij} remains positive definite, hence the equation for $v$ is parabolic.

Additionally, we see that for $x\in\ombdry$, $v$ satisfies
\begin{align*}
0&=G(x, \gradu(x, t))-G(x, \gradu(x, t+t_0))\notag\\
&=\left(\int_0^1\Gp(x, s\gradu(x, t)+(1-s)\gradu(x, t+t_0))ds\right)v_k\notag\\
&=: \alpha^kv_k
\end{align*}
By Theorem~\ref{unifobliquethm}, we have that $\Gp(x, \gradu(x, t))\nu_k\geq C>0$ for some $C$ uniform in $t$ and $x$. Thus as above, by choosing $t_0$ sufficiently small, we can ensure that $\alpha^k\nu_k\geq \frac{C}{2}>0$ and we see that $v$ satisfies a linear, uniformly oblique boundary condition.

Now following \cite[Section 6.2]{MR1918081} we obtain a translating solution of the same regularity as $u$, ie. a function $u^\infty(x, t)=u^\infty(x, 0)+C_\infty\cdot t$ for some constant $C_\infty$ that satisfies the equation~\eqref{floweqn}, such that $\lVert u-u^\infty\rVert_{C^k}\to 0$ as $t\to\infty$ for any $1\leq k\leq 4$. Thus we find
\begin{align*}
e^{C_\infty}f(x)=\lim_{t\to\infty}e^{\dot{u}^\infty}f(x)&=\lim_{t\to\infty}\det{(\hess{u^\infty-A(x, \gradu^\infty))}B^{-1}(x, \gradu^\infty)}\\
&=\lim_{t\to\infty}\det{(\hess{u-A(x, \gradu))}B^{-1}(x, \gradu)}\\
&=\lim_{t\to\infty}\det{[DT(x, \gradu)}]g(T(x, t)).
\end{align*}
We integrate both sides over $\Omega$, the $C^2$ estimates on $u$ along with bounds on the derivatives of $c$, $f$ and $g$ allow interchange of the integral and limit. Using the change of variables formula with the mass balance condition~\eqref{balance}, we obtain that $C_\infty=0$. Hence, $u^\infty(x, t) =u^\infty(x, 0)$ is independent of $t$, and by the convergence of $u$ to $u^\infty$ in the appropriate $C^k$ norms, we see it satisfies the desired elliptic equation~\eqref{elliptic eqn}, while Corollary~\ref{cor: T is onto} gives the desired mapping condition~\eqref{elliptic bdrycond}.

This completes the proof of the main theorem~\ref{thm: main theorem}.

\appendix
\appendixpage
\section{Construction of $\hstar$ and $h$}
Here we will show the construction of $\hstar$ and $h$ necessary to carry out the barrier arguments in the body of the paper. These constructions appear to be common knowledge in the field, but have not been explicitly written down to the knowledge of the author (c.f. \cite[Section 2]{MR2512204}).\\
Define the sets 
\begin{equation*}
\omepbdry=\left\{x\in\Omega \vert \dist(x, \ombdry)<\epsilon\right\}
\end{equation*}
and
\begin{equation*}
\omstarbdryepstar=\left\{y\in\omstar \vert \dist(y, \omstarbdry)<\epstar\right\}
\end{equation*}
\begin{thm}\label{thm: extended defining functions}
Assume that $\Omega$ and $\omstar$ are uniformly $c$ and $c^*$-convex with respect to each other. Then there exist $C^2$ functions $h$ on $\Omega$, and $\hstar$ on $\omstar$, and constants $\epsilon$, $\epstar$, $\delta_0$, $\delstar_0>0$ satisfying the following properties:
\begin{enumerate}
\item $\nabla h=\nu$ on $\ombdry$
\item $h<0$ on $\omepbdry$
\item $[D_{ij}h(x)-c^{l, k}c_{ij, l}(x, y)D_kh(x)]\xi_i\xi_j \geq \delta_0\lvert \xi \rvert^2$, $\forall x\in\omepbdry$, $y\in\omstar$, and $\xi \in\R^n$
\item $\nabla \hstar=\nustar$ on $\omstarbdry$
\item $\hstar<0$ on $\omstarbdryepstar$
\item $[D_{ij}\hstar(y)-c^{k, l}c_{l, ij}(x, y)D_k\hstar(y)]\xi_i\xi_j \geq \delstar_0\lvert \xi \rvert^2$, $\forall y\in\omstarbdryepstar$, $x\in\Omega$, and $\xi \in\R^n$
\end{enumerate}
\end{thm}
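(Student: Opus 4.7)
The plan is to start from a normalized defining function and add a multiple of its square to gain uniform $c$-convexity in every direction at the boundary, then extend to a small neighborhood by continuity.

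First I would take $\hbar(x) = -\dist(x, \ombdry)$ on a tubular neighborhood of $\ombdry$; since $\ombdry$ is smooth, $\hbar \in C^2$ there, with $\nabla\hbar = \nu$, $\hbar = 0$ on $\ombdry$, and $\hbar < 0$ just inside. I would then set $h = \hbar + \tfrac{K}{2}\hbar^2$ for a constant $K > 0$ to be chosen. Then $\nabla h = (1+K\hbar)\nabla\hbar$, so $\nabla h = \nu$ on $\ombdry$, and for small $|\hbar|$ the sign $h<0$ is preserved inside $\Omega$, giving properties (1) and (2) directly.

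Differentiating twice and evaluating on $\ombdry$ gives
\[
D_{ij}h - c^{l,k}c_{ij,l}D_k h = \bigl[D_{ij}\hbar - c^{l,k}c_{ij,l}\nu_k\bigr] + K\nu_i\nu_j.
\]
For a test vector $\xi = \tau + a\nu$ with $\inner{\tau}{\nu} = 0$, the bracketed matrix contracted with $\tau\otimes\tau$ is $\geq \delbar|\tau|^2$ by the uniform $c$-convexity hypothesis~\eqref{unifcconvex}; the mixed $\tau$--$\nu$ and the pure $\nu$--$\nu$ contributions from the bracketed term are bounded in absolute value by a constant $C$ depending on $\|\hbar\|_{C^2}$, $\|c\|_{C^3(\omclose \times \omstarclose)}$, and the uniform bound on $c^{l,k}$ (finite by~\eqref{nondeg} and compactness of $\omclose \times \omstarclose$). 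Applying Young's inequality to absorb the cross terms yields
\[
[D_{ij}h - c^{l,k}c_{ij,l}D_k h]\xi_i\xi_j \geq \tfrac{\delbar}{2}|\tau|^2 + (K - C')a^2,
\]
and choosing $K$ sufficiently large produces a uniform lower bound $\delta_0|\xi|^2$ on $\ombdry$, uniformly in $y \in \omstarclose$ by compactness.

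Since the left-hand side is continuous in $(x,y)$ and the strict inequality holds on $\ombdry \times \omstarclose$, by compactness it persists on $\omepbdry \times \omstarclose$ for $\epsilon > 0$ sufficiently small (with a possibly decreased constant $\delta_0$), yielding property (3). An entirely symmetric construction, exchanging the roles of $\Omega$ and $\omstar$ and invoking uniform $c^*$-convexity, produces $\hstar$ and $\epstar$ satisfying (4)--(6). The main obstacle is the key estimate at the boundary: the bracketed matrix is only controlled on the tangent subspace by the uniform $c$-convexity hypothesis, so the substantive work is to balance the potentially negative mixed and normal contributions against the added term $K\nu_i\nu_j$ via Young's inequality, and to extract the required uniformity in $y$; the rest of the argument is routine.
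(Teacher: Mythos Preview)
Your proposal is correct and follows essentially the same approach as the paper: the paper sets $h(x)=Cd^2(x)-d(x)$ with $d=\dist(\cdot,\ombdry)$, which is exactly your $h=\hbar+\tfrac{K}{2}\hbar^2$ with $\hbar=-d$ and $K=2C$, then performs the same tangential--normal decomposition $\xi=\tau+a\nu$, uses~\eqref{unifcconvex} for the $\tau\tau$ part, absorbs the cross terms via Cauchy's inequality, and extends off $\ombdry$ by continuity and compactness. The only cosmetic difference is that the paper writes out the continuity extension in more detail, but the idea is identical.
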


\begin{proof}
We will construct $h$, the construction for $\hstar$ is similar, but with the variables reversed.

Fix a $y\in\omstar$. Let $h(x)=Cd^2(x)-d(x)$, where $d(x)=\dist(x, \ombdry)$, and $C>0$ is a constant to be picked. We calculate that 
\begin{equation*}
D_ih=(2Cd-1)d_i
\end{equation*}
and
\begin{equation*}
D_{ij}h=(2Cd-1)d_{ij}+2Cd_id_j.
\end{equation*}
Clearly, $\nabla h(x)=-\nabla d(x)=\nu(x)$ for $x\in\ombdry$.

Now, fix a point $x\in\ombdry$, and take any $\xi\in\R^n$. We decompose $\xi=\tau(x)+a\nu(x)$ for some $a\in\R$ and $\tau(x)$ tangential to $\ombdry$ at $x$. Then, 
\begin{align*}
[D_{ij}h(x)-c^{l, k}c_{ij, l}(x, y)D_kh(x)]\xi_i\xi_j&=[-d_{ij}(x)+c^{l, k}c_{ij, l}(x, y)d_k(x)]\xi_i\xi_j\\
&\qquad+[2Cd_id_j]\xi_i\xi_j\\
&=I+II.
\end{align*}
Considering the matrix $d_id_j$, we see it has as a basis of eigenvectors: $-\nabla d=\nu$ with corresponding eigenvalue $\lvert \nabla d\rvert^2=1$, and $n-1$ orthogonal vectors, with corresponding eigenvalues of $0$. Thus, 
\begin{align*}
II&=2Cd_id_j(\tau_i\tau_j+2\tau_ia\nu_j+a^2\nu_i\nu_j)\\
&=0+0+2Ca^2
\end{align*}
and also
\begin{align*}
I&=[-d_{ij}(x)+c^{l, k}c_{ij, l}(x, y)d_k(x)](\tau_i\tau_j+2\tau_ia\nu_j+a^2\nu_i\nu_j)\\
&=[D_i\nu_j(x)-c^{l, k}c_{ij, l}(x, y)\nu_k(x)](\tau_i\tau_j+2a\tau_i\nu_j+a^2\nu_i\nu_j)\\
&=III+IV+V.
\end{align*}
By the assumption~\eqref{unifcconvex} of uniform $c$-convexity, we see that
\begin{equation*}
III\geq \delbar\lvert \tau\rvert^2
\end{equation*}
while
\begin{equation*}
V\geq -C_0a^2
\end{equation*}
for some $C_0>0$ depending only on $\lVert c\rVert_{C^3}$ and $\Omega$.
Finally, using Cauchy's inequality with an $\epsilon$, we obtain
\begin{equation*}
IV\geq -\frac{\delbar}{2}\lvert\tau\rvert^2 -\frac{2}{\delbar}a^2.
\end{equation*}
Combining these, we find for $C$ sufficiently large, whenever $x\in\ombdry$
\begin{align*}
[D_{ij}h(x)-c^{l, k}c_{ij, l}(x, y)D_kh(x)]\xi_i\xi_j&\geq \frac{\delbar}{2}\lvert\tau\rvert^2+(2C-C_0-\frac{2}{\delbar})a^2\\
&\geq \frac{\delbar}{2} (\lvert\tau\rvert^2+a^2)=\frac{\delbar}{2}\lvert\xi\rvert^2.
\end{align*}

Now, by taking $\epsilon$ sufficiently small, we can ensure $1\geq1-2Cd>\frac{1}{2}$ and that $h<0$ in $\omepbdry$. Using the compactness of $\ombdry$ and the continuity of $h$ and uniform continuity of $c^{l, k}c_{ij, l}$ in $x$ and $y$, and by taking $\epsilon$ smaller if necessary, we can ensure that for each $x\in\omepbdry$, there exists an $x_0\in\ombdry$ so that
\begin{equation*}
\begin{cases}
&\lvert [(-d_{ij}(x)+c^{l, k}c_{ij, l}(x, y)d_k(x))-(-d_{ij}(x_0)+c^{l, k}c_{ij, l}(x_0, y)d_k(x_0))]\xi_i\xi_j\rvert<\frac{\delbar}{16}\lvert\xi\rvert^2\\
&2C\vert[d_id_j(x)-d_id_j(x_0)]\xi_i\xi_j\rvert<\frac{\delbar}{16}\lvert\xi\rvert^2.
\end{cases}
\end{equation*}
Combining these, we find for $x\in\omepbdry$
\begin{align*}
&[D_{ij}h(x)-c^{l, k}c_{ij, l}(x, y)D_kh(x)]\xi_i\xi_j\\
&=(1-2Cd)[-d_{ij}(x)+c^{l, k}c_{ij, l}(x, y)d_k(x)]\xi_i\xi_j+[2Cd_id_j(x)]\xi_i\xi_j\\
&\geq (1-2Cd)[(-d_{ij}(x_0)+c^{l, k}c_{ij, l}(x_0, y)d_k(x_0))\\
&-\lvert (-d_{ij}(x)+c^{l, k}c_{ij, l}(x, y)d_k(x))-(-d_{ij}(x_0)+c^{l, k}c_{ij, l}(x_0, y)d_k(x_0))\rvert]\xi_i\xi_j\\
&\qquad+2C[d_id_j(x_0)-\vert(d_id_j(x)-d_id_j(x_0))\rvert]\xi_i\xi_j\\
&\geq (1-2Cd)(-d_{ij}(x_0)+c^{l, k}c_{ij, l}(x_0, y)d_k(x_0))\xi_i\xi_j+2Cd_id_j(x_0)\xi_i\xi_j\\
&-\frac{\delbar}{16}\lvert\xi\rvert^2-\frac{\delbar}{16}\lvert\xi\rvert^2\\
&=VI+VII-\frac{\delbar}{8}\lvert\xi\rvert^2.
\end{align*}
Now we have
\begin{equation*}
\begin{cases}
VI\geq(-d_{ij}(x_0)+c^{l, k}c_{ij, l}(x_0, y)d_k(x_0))\xi_i\xi_j,&(-d_{ij}(x_0)+c^{l, k}c_{ij, l}(x_0, y)d_k(x_0))\xi_i\xi_j<0\\
VI\geq\frac{1}{2}(-d_{ij}(x_0)+c^{l, k}c_{ij, l}(x_0, y)d_k(x_0))\xi_i\xi_j,&(-d_{ij}(x_0)+c^{l, k}c_{ij, l}(x_0, y)d_k(x_0))\xi_i\xi_j\geq 0
\end{cases}
\end{equation*}
so in either case we obtain
\begin{align*}
VI+VII-\frac{\delbar}{8}&\geq\frac{1}{2}(D_{ij}h(x_0)-c^{l, k}c_{ij, l}(x_0, y)D_kh(x_0))\xi_i\xi_j-\frac{\delbar}{8}\lvert\xi\rvert^2\\
&\geq(\frac{\delbar}{4}-\frac{\delbar}{8})\lvert\xi\rvert^2\\
&=\delta_0\lvert\xi\rvert^2
\end{align*}
for $\delta_0=\frac{\delbar}{8}$.
\end{proof}

We now make an auxillary construction, following \cite[Appendix A]{2008arXiv0805.3715B}.
\begin{cor}\label{cor: construction of G}
There exists a function $G(x, p)$ defined on $\omclose \times \nabla_xc(\omclose, \omstarclose)$ such that 
\begin{equation*}
\Gpp{l}\xi_k\xi_l\geq \deltil\lvert\xi\rvert^2
\end{equation*}
for all $\xi\in\R^n$, for some $\deltil>0$, depending only on $\omstar$ and $c$.\\
Also, for each $t\in[0, \tmax)$, $G(x, \gradu(x, t))=\hstarbar(Y(x, \gradu(x, t)))$ for all $x$ in some neighborhood of $\ombdry$ (possibly depending on $t$).\\
\end{cor}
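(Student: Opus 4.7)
The approach is to realize $G$ as a $p$-reparameterization of a global extension of $\hstar$ from Theorem~\ref{thm: extended defining functions} to all of $\omstarclose$, preserving the uniform $c^*$-convexity bracket bound. The starting observation is that for any $C^2$ function $\phi$ on $\omstarclose$, setting $G_\phi(x,p) := \phi(Y(x,p))$, two applications of the chain rule combined with the identity $\partial_{y_s}c^{m,k} = -c^{m,p}c_{p,qs}c^{q,k}$ (obtained from differentiating $c^{m,p}c_{p,k}=\delta^m_k$ in $y_s$) give
\begin{equation*}
D_{p_kp_l} G_\phi(x,p) = c^{m,k}(x,y)c^{s,l}(x,y)\bigl[\phi_{ms}(y) - \phi_r(y)c^{r,p}(x,y)c_{p,ms}(x,y)\bigr]\bigg|_{y=Y(x,p)}.
\end{equation*}
The bracket is precisely the expression in the uniform $c^*$-convexity condition of Theorem~\ref{thm: extended defining functions}. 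Hence if $\phi$ satisfies that condition uniformly on $\omstarclose$ for all $x\in\omclose$, then by~\eqref{nondeg} we get $D_{p_kp_l}G_\phi \xi_k\xi_l \geq \deltil|\xi|^2$.

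Since the $\hstar$ from Theorem~\ref{thm: extended defining functions} only has the bracket bound on the collar $\omstarbdryepstar$, I would extend it to a $C^2$ function $\tilde h$ on all of $\omstarclose$ that has the bracket bound globally. Fix an interior point $y_0\in\omstar$ and a smooth cutoff $\eta$ with $\eta\equiv 1$ on $\{y\in\omstar:\dist(y,\omstarbdry)<\epstar/2\}$ and $\eta\equiv 0$ outside $\omstarbdryepstar$, and set
\begin{equation*}
\tilde h(y) := \eta(y)\hstar(y) + (1-\eta(y))\bigl(K|y-y_0|^2 + C_0\bigr)
\end{equation*}
for constants $K,C_0$ to be chosen. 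For $K$ large the quadratic alone satisfies the bracket bound uniformly (its Hessian $2K\delta_{ms}$ dominates the $O(K)$ correction $-2Kc^{r,p}c_{p,ms}(y-y_0)_r$), and on the transition annulus the cross terms arising from $\nabla\eta$ and $\hessy\eta$ involve only bounded combinations of $\hstar-K|y-y_0|^2-C_0$ and its first derivatives.

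The main obstacle is exactly this extension step: the cutoff cross terms on the transition annulus must be absorbed by the dominant $2K I$ contribution. The key point is that the $\eta$-derivatives are fixed once $\eta$ is chosen and $\hstar$ is independent of $K$, so making $K$ sufficiently large (after choosing $C_0$ so that $\hstar$ and $K|y-y_0|^2+C_0$ are of comparable size on the annulus) preserves the bracket bound throughout $\omstarclose$.

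Finally, since $\hstar$ is itself a normalized defining function for $\omstar$, I would take $\hstarbar:=\hstar$ without loss of generality, which is consistent with the original definition at the start of the paper. Defining $G(x,p):=\tilde h(Y(x,p))$ then yields $\Gpp{l}\xi_k\xi_l\geq\deltil|\xi|^2$ on $\omclose\times\nabla_xc(\omclose,\omstarclose)$ by the first two steps. For the second property, fix $t\in[0,\tmax)$: the boundary condition~\eqref{bdrycond} gives $T(x,t)\in\omstarbdry$ for $x\in\ombdry$, and by the $C^2$ regularity of $u(\cdot,t)$ up to $\ombdry$, $T(\cdot,t)$ is continuous up to $\ombdry$. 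Hence a neighborhood $U_t$ of $\ombdry$ in $\omclose$ is mapped by $T(\cdot,t)$ into $\{y:\dist(y,\omstarbdry)<\epstar/2\}$, the region on which $\tilde h=\hstar=\hstarbar$, yielding $G(x,\gradu(x,t))=\hstarbar(T(x,t))$ on $U_t$.
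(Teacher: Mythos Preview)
Your chain-rule identity for $D_{p_kp_l}G_\phi$ is correct and is exactly how the paper computes the $p$-Hessian of $\hstar\circ Y$. The gap is in the extension step.

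First, the bracket for the quadratic $q(y)=K|y-y_0|^2$ is
\[
q_{ms}-q_r c^{r,p}c_{p,ms}=2K\bigl[\delta_{ms}-(y-y_0)_r\,c^{r,p}c_{p,ms}\bigr],
\]
so both the Hessian and the correction scale linearly in $K$. Whether the right-hand side is positive definite is therefore decided by the bracket $\delta_{ms}-(y-y_0)_r c^{r,p}c_{p,ms}$ alone, and there is no reason this should be uniformly positive over all $x\in\omclose$, $y\in\omstarclose$. Enlarging $K$ cannot help.

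Second, even granting that, the cutoff argument breaks. Writing $\tilde h=\eta\hstar+(1-\eta)q$, the bracket of $\tilde h$ contains the cross terms
\[
\eta_m(\hstar_s-q_s)+\eta_s(\hstar_m-q_m)+\bigl[\eta_{ms}-\eta_r c^{r,p}c_{p,ms}\bigr](\hstar-q),
\]
and since $\nabla q=2K(y-y_0)$ these are of order $K$ on the transition annulus. A constant shift $C_0$ cannot make $\hstar-q$ bounded independently of $K$ (the quadratic varies by $K(b-a)$ on the annulus where $|y-y_0|^2\in[a,b]$), and the positive contribution $\eta\cdot[\hstar\text{-bracket}]+(1-\eta)\cdot[q\text{-bracket}]$ is at most $O(K)$ as well, so the balance is set by fixed geometric constants and cannot be tilted by sending $K\to\infty$.

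The paper sidesteps both issues by working directly in $p$ and taking a \emph{smooth maximum} rather than a cutoff: one sets
\[
G(x,p)=\tfrac{1}{2}\bigl(\hstar(Y(x,p))+\hstar_1(p)\bigr)+\phi\!\left(\tfrac{1}{2}\bigl(\hstar(Y(x,p))-\hstar_1(p)\bigr)\right),
\]
with $\hstar_1(p)=\tfrac{1}{C_1}(|p|^2-K^2)$ a quadratic in $p$ (so $D^2_p\hstar_1=\tfrac{2}{C_1}I$ trivially) and $\phi$ a $C^2$ convex approximation of $|\cdot|$ with $|\phi'|\le 1$. Because $|\phi'|\le1$, the $p$-Hessian of $G$ is a genuine convex combination of the $p$-Hessians of $\hstar\circ Y$ and $\hstar_1$, plus a nonnegative $\phi''$-term; no uncontrolled cross terms appear. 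The constants $K,C_1$ are then chosen only to arrange $\hstar(Y)>\hstar_1$ near $\ombdry$ and $\hstar(Y)<\hstar_1$ at the inner edge of the collar, which guarantees both $G=\hstar\circ Y$ near $\ombdry$ and $C^2$ regularity.
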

\begin{proof}
Let $\phi$ be a $C^2$ function on $\R$ such that 
\[
\begin{cases}
\phi''\geq 0&\\
\phi(s) = \lvert s\rvert,&\lvert s\rvert > \frac{\epstar}{16}.
\end{cases}
\]
Note that we may choose $\phi$ so that $\lvert \phi'\rvert \leq 1$ everywhere.
Then define 
\begin{equation*}
G(x, p)=
\begin{cases}
\frac{\hstar(Y(x, p))+h_1(p)}{2}+\phi\left(\frac{\hstar(Y(x, p))-h_1(p)}{2}\right),&Y(x, p)\in \omstarbdryepstar\\
\hstar_1(p)&\text else
\end{cases}
\end{equation*}
where 
\begin{equation*}
\hstar_1(p)= \frac{1}{C_1}(\lvert p\rvert^2-K^2)
\end{equation*}
for some $C_1>0$ to be determined, and $K> \sup \lvert \gradu \rvert$ (which bounded independent of $t$ by Theorem~\ref{gradientbdd}).\\
Calculating, we find that for $Y(x, p)\in \omstarbdryepstar$,
\begin{align*}
\Gpp{l}&=\frac{1}{2}\left[1+\phi'\left(\frac{\hstar(Y(x, p))-\hstar_1(p)}{2}\right)\right]\Dpp{l}\hstar(Y(x, p))\\
&+\frac{1}{2}\left[1-\phi'\left(\frac{\hstar(Y(x, p))-\hstar_1(p)}{2}\right)\right]\Dpp{l}\hstar_1(p)\\
&+\frac{1}{4}\phi''\left(\frac{\hstar(Y(x, p))-\hstar_1(p)}{2}\right)(\Dp\hstar(Y(x, p))-\Dp\hstar_1(p))\\
&\qquad\cdot(\Dp[l]\hstar(Y(x, p))-\Dp[l]\hstar_1(p))\\
&=I_{kl}+II_{kl}+III_{kl}.
\end{align*}
Since $\phi''\geq 0$, we have that $III_{kl}$ is positive semi-definite.\\
By a simple calculation, 
\begin{align*}
\Dpp{l}\hstar(Y(x, p))&=(\Dp \hstar_m)Y^m_{p_l}=(\Dp \hstar_m) c^{m, l}=\notag\\
&=\hstar_{mn}Y^n_{p_k}c^{m, l}+\hstar_mc^{m, l}_{\ ,n}Y^n_{p_k}\notag\\
&=\hstar_{mn}c^{n, k}c^{m, l}+\hstar_mc^{m, l}_{\ ,n}c^{n, k}\notag\\
&=c^{n,k}c^{m,l}(\hstar_{mn}-\hstar_rc^{r, p}c_{p, nm})\notag\\
\end{align*}
so by Theorem~\ref{thm: extended defining functions},  
\begin{equation*}
\Dpp{l}\hstar(Y(x, p)) \xi_k \xi_l \geq \delstar_0 \sum_i \lvert c^{i, k}\xi_k \rvert^2 \geq 2\deltil \lvert \xi \rvert^2 \text{, some constant } \deltil.
\end{equation*}
Now, since $\lvert\phi'\rvert \leq 1$, we have that
\begin{equation*}
(I_{kl}+II_{kl})\xi_k\xi_l\geq\frac{1}{2}\Dpp{l}\hstar(Y(x, p)) \xi_k \xi_l\geq \deltil \lvert \xi \rvert^2
\end{equation*}
or
\begin{equation*}
(I_{kl}+II_{kl})\xi_k\xi_l\geq \frac{1}{2}\Dpp{l}\hstar_1(p) \xi_k \xi_l\geq \frac{1}{C_1}\lvert\xi\rvert^2
\end{equation*}
and hence, $\Gpp{l}$ is uniformly positive definite for $Y(x, p)\in \omstarbdryepstar$. For $(x, p)\in Int(Y^{-1}(\omstarbdryepstar))$, $G=\hstar_1$ so $\Gpp{l}$ is clearly also uniformly positive definite there. We will conclude the proof of positivity by showing that $\hstar(Y(x, p))<\hstar_1(p)$ on $\omtilbdry \cap Int(\omclose \times \nabla_xc(\omclose, \omstarclose))$, hence $G=\hstar_1$ on a neighborhood of that set.
We claim that if $(x_0, p_0)\in\omtilbdry \cap Int(\omclose \times \nabla_xc(\omclose, \omstarclose))$, then $d(Y(x_0, p_0), \omstarbdry)=\epstar$. Indeed, we can just use the continuity $Y$ and the distance to the boundary, along with taking sequences lying in $\omtil$ and $Int(\omclose \times \nabla_xc(\omclose, \omstarclose))\setminus \omtil$ that converge to $(x_0, p_0)$.
With this, we can see that $\hstar(Y(x_0, p_0))=(C(\epstar)^2-\epstar)=\epstar(C\epstar-1)<-\frac{\epstar}{2}$ for $\epstar$ small enough depending only on $C$. Now by choosing $\frac{1}{C_1}$ sufficiently small, we can ensure that $-\frac{\epstar}{2}<-\frac{K^2}{C_1}\leq\hstar_1(p)$ on $\omtilbdry \cap Int(\omclose \times \nabla_xc(\omclose, \omstarclose))$, giving us the desired claim. This also shows that $G$ is $C^2$.

Finally, for a fixed $t$, $\hstar(Y(x, \gradu(x, t)))=0>\hstar_1(\gradu(x, t))$ for $x\in\ombdry$. Thus, by the continuity of $\gradu$ and the compactness of $\ombdry$, we can find a neighborhood of $\ombdry$ on which $G(x, \gradu(x, t))=\hstar(Y(x, \gradu(x, t)))=\hstarbar(Y(x, \gradu(x, t)))$.
\end{proof}
\nocite{MR2512204}
\nocite{MR2008688}
\nocite{MR2034229}
\bibliographystyle{plain}
\bibliography{mybiblio}

\end{document}